\definecolor{teal}{rgb}{0.0,0.5,0.5}
\definecolor{frenchrose}{rgb}{0.96,0.29,0.54}
\definecolor{lg}{rgb}{0.36,0.99,0.82}
\definecolor{dblue}{rgb}{0,0,.5}
\definecolor{dpink}{cmyk}{.2,1,.1,.04}
\definecolor{purple}{rgb}{0.35,0.04,0.64}
\definecolor{borange}{rgb}{1, .388, 0}
\definecolor{dpurple}{rgb}{0.61,0.22,1.00}
\definecolor{purp}{rgb}{0.44,0.00,0.87}
\definecolor{green}{rgb}{0.00,0.44,0.00}
\definecolor{junebud}{rgb}{0.74,0.85,0.34}
\definecolor{plum}{rgb}{0.56,0.27,0.52}
\newtheorem{theorem}{Theorem}[section]
\newtheorem{definition}{Definition}[section]
\newtheorem{lemma}{Lemma}[section]
\newtheorem{remark}{Remark}[section]
\newtheorem{example}{Example}
\def\CC{{\textmd \kern.24em \vrule width.02em height1.4ex depth-.05ex \kern-.26emC}}
\def\TagOnRight
\def\QQ{\rlap {\raise 0.4ex \hbox{$\scriptscriptstyle |$}} {\hskip -0.1em Q}}
\begin{document}
\begin{center}
  {{\bf \large {\rm {\bf Symmetric fractional order reduction method with $L1$ scheme on graded mesh for time fractional nonlocal diffusion-wave equation of Kirchhoff type }}}}
\end{center}
\begin{center}
	{\textmd {\bf Pari J. Kundaliya,}}\footnote{\it Department of Mathematics,  Institute of Infrastructure, Technology, Research and Management, Ahmedabad, Gujarat, India, (pariben.kundaliya.pm@iitram.ac.in)}
	{\textmd {{\bf Sudhakar Chaudhary}}}\footnote{\it Department of Mathematics,  Institute of Infrastructure, Technology, Research and Management, Ahmedabad, Gujarat, India, (sudhakarchaudhary@iitram.ac.in)}
\end{center}
\begin{abstract}
  In this article, we propose a linearized fully-discrete scheme for solving a time fractional nonlocal diffusion-wave equation of Kirchhoff type. The scheme is established by using the finite element method in space and the $L1$ scheme in time. We derive the $\alpha$-robust \textit{a priori} bound and \textit{a priori} error estimate for the fully-discrete solution in $L^{\infty}\big(H^1_0(\Omega)\big)$ norm, where $\alpha \in (1,2)$ is the order of time fractional derivative. Finally,  we perform some numerical experiments to verify the theoretical results.
\end{abstract}
{\bf Keywords:} Time fractional diffusion-wave equation; Nonlocal problem; Weak singularity; $L1$ scheme; $\alpha$-robust; Convergence estimate  \\
\noindent {\bf AMS(MOS):} 35R11, 65M60, 65M12.

\section{Introduction}

 In recent years, the time fractional diffusion-wave equations are studied very intensively (see \cite{[FMPP],[TSZT],[OA],[JHDL],[Ch1hyp],[sfor1],[Sty_frac_wave],[H2N2_interploation]} and references therein).  Fractional diffusion-wave equation can be derived from classical diffusion equation or wave equation by replacing the first or second time derivative with fractional order derivative \cite{[TSZT],[OA]}.   Fractional diffusion-wave equation has applications in modeling of anomalous diffusion processes, electromagnetic and acoustic responses \cite{[JHDL],[ZZXW2006]}.

 In this work, we consider the following time fractional nonlocal diffusion-wave equation of Kirchhoff type:
 \begin{subequations}\label{4e1}
 	\begin{align}
 	\label{4cuc3:1.1}
 	^{c}_{0}{D}^{\alpha}_{t}u(x,t)-a(l(u)) \: \Delta u(x,t)&=f(x,t)  \quad \mbox{in}  \quad \Omega\times(0,T], \\
 	\label{4cuc3:1.2}
 	u(x,t)&=0  \quad \mbox{on}  \quad \partial\Omega\times(0,T],\\
 	\label{4cuc3:1.3}
 	u(x,0)=u_0(x) \; \mbox{and} \; u_t(x,0)&=u_1(x) \quad \mbox{in} \quad \Omega,
 	\end{align}
 \end{subequations}
 where $\Omega \subseteq \mathbb{R}^d$ $(d=1$ or $2)$ is a bounded domain with smooth boundary $\partial \Omega,$ $u_t = \frac{\partial u}{\partial t}$ and $l(u)= \int_{\Omega} |\nabla u|^2 \, dx \, = \|\nabla u \|^2.$ The term $^{c}_{0}{D}^{\alpha}_{t}u(x,t)$ is the $\alpha ^{th}$ order Caputo fractional derivative of $u$ with $\alpha \in (1,2)$ and it is defined \cite{[r1]} as \\
 \begin{equation}\label{4e2a}
 \begin{split}
 {^{c}_{0}D^{\alpha}_{t}}u(x,t) =& \frac{1}{\Gamma (2-\alpha)}\int_{0}^{t}(t-s)^{1-\alpha} \; \frac{\partial^2 u(x,s)}{\partial s^2} \, ds , \;\; t>0.
 \end{split}
 \end{equation}
 When $\alpha =1,$ \eqref{4e1} reduces to nonlocal diffusion equation of Kirchhoff type \cite{[MCBL2003],[SZMC2005]} and when $\alpha =2,$ \eqref{4e1} reduces to nonlocal wave equation of Kirchhoff type \cite{[sv_h],[SI_P],[A_Arosio_1996],[gs_Kr]}. For $1<\alpha <2,$  equation \eqref{4e1}  is expected to interpolate nonlocal diffusion and wave equations of Kirchhoff type. This is why we call \eqref{4e1} the time fractional nonlocal diffusion-wave equation of Kirchhoff type.
 Many papers in the literature have been produced on the study of existence-uniqueness of following nonlocal wave equation of Kirchhoff type (which is corresponding to $\alpha=2$)\cite{[sv_h],[SI_P],[gs_Kr]},
 \begin{subequations}\label{4e1a}
 	\begin{align}
 	\label{4cuc3:1a.1}
 	u_{tt} - M \Big( \int_{\Omega} |\nabla u|^2 \, dx \Big) \: \Delta u &=f(x,t) \quad  \mbox{in}  \; \, \Omega\times(0,T], \\
 	\label{4cuc3:1a.2}
 	u(x,t)&=0  \quad \mbox{on}  \; \, \partial\Omega\times(0,T],\\
 	\label{4cuc3:1a.3}
 	u(x,0)=u_0(x) \; \mbox{and} \; u_t(x,0)&=u_1(x) \quad \mbox{in} \; \, \Omega.
 	\end{align}
 \end{subequations}
 Equation \eqref{4e1a} describes the vibrations of an elastic string in $1D$ or a membrane in $2D$ \cite{[uber_machanik]}. Authors in \cite{[I_Christie_1984],[J_Peradze_2005]} developed numerical algorithms for solving 1D nonlocal Kirchhoff equation \eqref{4e1a}. In \cite{[sv_h]}, finite element method with Newton iteration is used to solve 2D nonlocal Kirchhoff equation \eqref{4e1a}. In \cite{[TSZT]}, authors consider a general $1D$ time fractional wave equation for a vibrating string and discussed about its solution by using method of separation of variables and Laplace transform. \\
Getting analytical solution of \eqref{4e1} is either not possible or difficult due to nonlocal nature of time fractional derivative and presence of nonlocal diffusion coefficient. Now a days, study of numerical solution of fractional order PDEs has become a significant area of research among researchers. There are many work in the literature related to numerical analysis of
 	time fractional wave equations \cite{[Ch1hyp],[sfor1],[Sty_frac_wave],[H2N2_interploation],[distri_ord_wave_eq],[l_fdm_frac_wave],[time_multi_term_frac]}. In particular, authors in \cite{[distri_ord_wave_eq]} considered a time distributed-order wave equation and solved it by using alternating direction implicit difference scheme.
 In \cite{[l_fdm_frac_wave],[time_multi_term_frac]}, authors introduced a new variable for order reduction of time derivative and derived a temporal second-order scheme to obtain the numerical solution of a time multi-term fractional wave equation. The symmetric fractional order reduction method is used in \cite{[sfor1]} to solve a semilinear fractional diffusion-wave equation.
 The solution $u$ of a time fractional wave equation possibly has a weak singularity near $t=0$ \textit{i.e.,} $u_{tt}$ blows up at $t = 0$ \cite{[Sty_frac_wave]}. In present work, we also assume that solution of \eqref{4e1} has a weak singularity near time $t=0.$ When we use $L1$ scheme on uniform mesh, weak singularity often leads to suboptimal convergence rate \cite{[sp1],[r8]}.
 To overcome this difficulty, we use graded mesh \cite{[kwn],[Ch1hyp],[Sty_frac_wave]} to discretize the time interval.\\
 In this work, we propose a linearized scheme for solving \eqref{4e1}. This scheme is based on extrapolation \cite{[Dilip_2022]} and it uses the $L1$ scheme with graded mesh in time \cite{[sp1],[r02],[r11]} and finite element method in space. Since $L1$ scheme is developed for solving time fractional diffusion equation ($0 < \alpha < 1$), we need to transform problem \eqref{4e1} into a lower order system of equations. For this we follow the idea given in \cite{[sfor1]}. Also, the \textit{a priori} bound and error estimate which we derive in the present work are $\alpha$-robust \cite{[r15]}.
 \\
 \noindent The organization of paper is as follows: In Section 2, we provide some usual notations and reformulate problem \eqref{4e1} using symmetric fractional order reduction method. Linearized fully-discrete scheme is given in Section 3 to obtain the numerical solution of problem \eqref{4e1}.  \textit{A priori} bound and \textit{a priori} error estimate for the fully-discrete solution are derived in Section 4 and Section 5, respectively. Numerical experiments are provided in Section 6 to confirm the theoretical findings.  \\
\section{Preliminaries and Order reduction}
Notations: Throughout the article, $C > 0$ denotes a generic constant independent of mesh parameters $h$ and $N.$  We use $(\cdot,\cdot)$ for the inner product and $ \|\cdot\|$ for the norm on space $L^2(\Omega).$  For $m \in \mathbb{N}$, $H^m(\Omega)$ denotes the standard Sobolev space with the norm $\|\cdot\|_m$ and space $H_{0}^1(\Omega)$ consist of functions from $H^1(\Omega)$ whose traces vanish on the boundary $\partial\Omega.$ Also, we write $\|\cdot\|_{L^{\infty}(H^m)} = \sup_{0<t\leq T}\|\cdot\|_m, \ m\in \mathbb{N}.$\\
For our analysis, we require following hypotheses on functions $a$, $f$, $u_0$ and $u_1$ \cite{[sv_h]}:
\begin{itemize}
	\item {H1:} For the function $a: \mathbb{R} \rightarrow \mathbb{R}$, there exist $m_1, m_2 >0$ such that  $0<m_1 \le a(x) \le m_2 < \infty, \; \forall \, x \in \mathbb{R}.$
	\item {H2:} The function $a: \mathbb{R} \rightarrow \mathbb{R}$ is Lipschitz continuous. \textit{i.e.,} there exists $L>0$ such that
	\begin{equation}\label{e5}
	|a(x_1)-a(x_2)| \le L \, |x_1-x_2|, \quad \forall \, x_1, x_2 \in \mathbb{R}.
	\end{equation}
	\item {H3:} $f \in L^{2}(0, T; L^2(\Omega))$ and $u_0,$ $u_1 \in H_0^1(\Omega)\cap H^2(\Omega)$.
\end{itemize}
In this work, we also assume that problem \eqref{4e1} has a unique solution with sufficient regularity which is required in our further analysis.\\

Now, we reformulate problem \eqref{4e1} using symmetric fractional order reduction technique \cite{[sfor1]}. For this we need following Lemma.
\begin{lemma}\label{4l1}
   \cite{[sfor1]} For $\alpha \in (1,2)$ and any function $z(t) \in C^1[0,T] \cap C^2(0,T],$ it holds that
   \begin{equation}\label{4e2}
      ^{c}_{0}D^{\alpha}_t z(t) = \, ^{c}_{0}D^{\alpha}_t \overline{z}(t) = \, ^{c}_{0}D^{\frac{\alpha}{2}}_t \big( ^{c}_{0}D^{\frac{\alpha}{2}}_t \overline{z}(t) \big),
   \end{equation}
   where $\overline{z}(t) = z(t) - t \, z'(0).$
\end{lemma}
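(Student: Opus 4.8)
The plan is to establish the two equalities in \eqref{4e2} separately, using only elementary properties of the Riemann--Liouville fractional integral $I^{\mu}g(t) := \frac{1}{\Gamma(\mu)}\int_{0}^{t}(t-s)^{\mu-1}g(s)\,ds$, $\mu>0$, together with the representations of the Caputo derivative obtained directly from its definition: for $0<\beta<1$ one has ${}^{c}_{0}D^{\beta}_{t}g = I^{1-\beta}g'$, and for $1<\alpha<2$ one has ${}^{c}_{0}D^{\alpha}_{t}g = I^{2-\alpha}g''$ (the latter being just a rewriting of \eqref{4e2a}).

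\emph{First equality.} The function $\overline{z}$ differs from $z$ only by the affine term $t\,z'(0)$, whose second derivative is identically zero. Since $\alpha\in(1,2)$, the Caputo derivative in \eqref{4e2a} depends on its argument only through the second derivative, so ${}^{c}_{0}D^{\alpha}_{t}\big(t\,z'(0)\big)=0$, and by linearity of ${}^{c}_{0}D^{\alpha}_{t}$ we get ${}^{c}_{0}D^{\alpha}_{t}z = {}^{c}_{0}D^{\alpha}_{t}\overline{z}$.

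\emph{Second equality.} The decisive structural point is that $\overline{z}'(0) = z'(0) - z'(0) = 0$; this is exactly what the subtraction of $t\,z'(0)$ is engineered to achieve (it also makes the reduced variable $w := {}^{c}_{0}D^{\alpha/2}_{t}\overline{z}$ vanish at $t=0$, which is why the order reduction produces a well-posed first-order-in-$t$ system). Writing $\beta := \alpha/2\in(1/2,1)$, I would first record $w = I^{1-\beta}\overline{z}'$, using $\overline{z}\in C^{1}[0,T]\cap C^{2}(0,T]$. Differentiating this identity and invoking the Leibniz-type formula $\frac{d}{dt}I^{\mu}g(t) = I^{\mu}g'(t) + \frac{g(0)}{\Gamma(\mu)}t^{\mu-1}$ with $g=\overline{z}'$, the boundary term drops out because $\overline{z}'(0)=0$, leaving $w'(t) = I^{1-\beta}\overline{z}''(t)$. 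Consequently ${}^{c}_{0}D^{\beta}_{t}w = I^{1-\beta}w' = I^{1-\beta}\big(I^{1-\beta}\overline{z}''\big)$, and the semigroup law $I^{\mu}I^{\nu}=I^{\mu+\nu}$ applied with $\mu=\nu=1-\beta$ (so $\mu+\nu=2-\alpha\in(0,1)$) collapses the right-hand side to $I^{2-\alpha}\overline{z}'' = {}^{c}_{0}D^{\alpha}_{t}\overline{z}$, which is the claim.

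The main obstacle is not the algebra but the regularity near $t=0$: because the solutions of interest are expected to exhibit a weak singularity, $\overline{z}''$ need only be integrable, not bounded, on $(0,T]$. Hence the differentiation-under-the-integral step yielding $w'=I^{1-\beta}\overline{z}''$ and the composition rule $I^{1-\beta}I^{1-\beta}=I^{2-\alpha}$ must be justified for merely locally integrable data rather than for continuous functions. This is handled by the standard Fubini argument for composed fractional integrals together with an approximation of $\overline{z}'$ near the origin; once these are in place the remaining steps are routine unfoldings of the definitions.
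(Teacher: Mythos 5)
The paper gives no proof of this lemma at all --- it is simply quoted from the cited reference [sfor1] --- so there is nothing internal to compare against. Your argument is correct and is essentially the standard derivation: the first equality follows because the order-$\alpha$ Caputo derivative sees only $z''$ and the affine correction has vanishing second derivative, and the second equality hinges precisely on $\overline{z}'(0)=0$ killing the boundary term $\frac{\overline{z}'(0)}{\Gamma(1-\beta)}t^{-\beta}$ in $\frac{d}{dt}I^{1-\beta}\overline{z}'$, after which the semigroup law $I^{1-\beta}I^{1-\beta}=I^{2-\alpha}$ closes the computation; your caveat about justifying these steps when $\overline{z}''$ is only integrable near $t=0$ is the right technical point to flag and is handled by the Fubini/approximation argument you describe.
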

\noindent Next, we assume that $\beta = \frac{\alpha}{2},$ $\overline{u}(t) = u(t) - t \, u_1$ and $v= \, ^{c}_{0}{D}^{\beta}_{t}\overline{u}.$ Thus, $v(x,0)=0, \, \forall x \in \Omega$ \cite{[sfor1]}. Now, by applying Lemma \ref{4l1}, problem \eqref{4e1} can be transformed to following system of equations:
\begin{subequations}\label{4e3}
	\begin{align}
	\label{4cuc3:3.1}
	^{c}_{0}{D}^{\beta}_{t}v(x,t)-a(l(u)) \: \Delta \overline{u}(x,t)&=f(x,t) + t \ a(l(u)) \: \Delta u_1  \quad \mbox{in}  \quad \Omega\times(0,T], \\
	\label{4cuc3:3.2}
	^{c}_{0}{D}^{\beta}_{t}\overline{u} - v &=0 \quad \mbox{in} \quad \Omega\times(0,T],\\
	\label{4cuc3:3.3}
	\overline{u}(x,t)=0 \; \mbox{and} \; v(x,t)&=0 \quad \mbox{on}  \quad \partial\Omega\times(0,T],\\
	\label{4cuc3:3.4}
	\overline{u}(x,0)=u_0(x) \; \mbox{and} \; v(x,0)&=0 \quad \mbox{in} \quad \Omega.
	\end{align}
\end{subequations}
Now, the weak formulation of problem \eqref{4e3} is to find $\overline{u}(\cdot,t), \, v(\cdot,t) \in  H^1_0(\Omega)$ such that for each $t \in(0,T]$ and $\forall \omega \in H^1_0(\Omega),$
\begin{subequations}\label{4e4}
 \begin{align}
   \label{4e4:a}
     \big(^{c}_{0}{D}^{\beta}_{t}v, \omega \big) \, + \, a\big(l(u)\big) \, (\nabla \overline{u}, \nabla \omega) \, =& \, (f, \omega) \, + \, t \, a\big(l(u)\big) \, (\Delta u_1, \omega), \\
   \label{4e4:b}
     \big(^{c}_{0}{D}^{\beta}_{t}\overline{u}, \omega \big) - (v, \omega) \, =& \, 0, \\
   \label{4e4:c}
      \big( \overline{u}(x,0), \omega \big) = \big(u_0(x), \omega \big) \; &\mbox{and} \; \big( v(x,0), \omega \big) =0.
 \end{align}
\end{subequations}
\section{Fully-discrete scheme}
In order to derive the fully-discrete scheme, first we discretize the spatial domain $\Omega$.
%
Let $\Omega_h$ be a partition (quasi-uniform) of domain $\Omega$ into disjoint subintervals in 1D and triangles $T_{k}$ in 2D with a step size $h$. For a finite integer $M>0$, we consider the $M$-dimensional subspace $X_h$ of $H^{1}_{0}(\Omega)$ which is defined below.
$$X_h:=\Big\{w\in C^{0}(\bar{\Omega}): w_{|{\small T_{k}}}\in P_1(T_k), \: \forall \: T_{k}\in \Omega_h \: \, \mbox{and} \: \, w=0 \; \mbox{on} \; \partial \Omega\Big\}.$$
 Further, we define the function $\phi _i \in X_h$ (for each $i= 1,2,...,M$) in such a way that $\phi _i$ takes the value 1 at $i^{th}$ node and vanishes at other node points. Clearly, $\left\lbrace \phi _i  \right\rbrace ^{M}_{i=1}$ forms a basis for $X_h.$\\
In the following we recall the definition of discrete \textit{Laplacian}, $L^2$ projection and Ritz-projection \cite{[vth]}.
\begin{definition}
	The discrete \textit{Laplacian} is a map $\Delta _h : \, X_h \, \rightarrow \, X_h $ defined as
	\begin{equation}\label{4e29a}
	(\Delta _h w_1, \, w_2) \, = \, -(\nabla w_1, \, \nabla w_2), \quad \forall \: w_1, \, w_2 \, \in \, X_h.
	\end{equation}
\end{definition}
\begin{definition}
	The $L^2$ projection is a map  $P_h : L^2(\Omega) \rightarrow X_h$ such that
	\begin{equation}\label{e62a}
	( P_h w_1, \,  w_2) \, = \, ( w_1, \,  w_2), \quad \forall w_1 \in L^2(\Omega) \: \mbox{and}  \; \;  \forall w_2 \in X_h.
	\end{equation}
\end{definition}
\begin{definition}
	The Ritz-projection is a map  $R_h : H^1_0(\Omega) \rightarrow X_h$ such that
	\begin{equation}\label{e62}
	(\nabla R_h w_1, \, \nabla w_2) \, = \, (\nabla w_1, \, \nabla w_2), \quad \forall w_1 \in  H^1_0(\Omega) \: \mbox{and}  \; \;  \forall w_2 \in X_h.
	\end{equation}
\end{definition}
\noindent From \cite[Lemma 1.1]{[vth]}, we obtain 
\begin{equation}\label{4e40}
\begin{split}
\|{w-R_hw} \|_{L^2(\Omega)} \, + h \, \|{\nabla (w-R_hw)}\|_{L^2(\Omega)} \, \le& \, Ch^2 \, \|{\Delta w}\|_{L^2(\Omega)}, \quad \forall w \in H^2 \cap H^1_0.\\
\end{split}
\end{equation}
Next, we discretize the time domain $[0,T]$. For this let $N \in \mathbb{N}$ and $0=t_0<t_1<t_2 <...<t_N=T$ be a partition of the time interval $[0,T]$ such that $t_n=T(\frac{n}{N})^{r}$, for $n=0,1,...,N$, where $r \ge 1$ is the mesh grading parameter. Here, step size $\tau_n$ is given by $\tau_n = t_n - t_{n-1}$, $\forall \, n=1,2,...,N$. The step size $\tau_n$ and grid point $t_n$ satisfy following estimates \cite{[hr12], [r11],[Lio_21]}:
 \begin{equation}\label{4e5}
 \begin{split}
  \tau_n \le C T N^{-r} (n-1)^{r-1}, \quad \mbox{for} \; 2 \le n \le N, \\
 \end{split}
 \end{equation}
and \begin{equation}\label{4e5a}
\begin{split}
t_n \le 2^r \, t_{n-1}, \quad \mbox{for} \; 2 \le n \le N. \\
\end{split}
\end{equation}
From the estimates \eqref{4e5} and \eqref{4e5a}, it follows that there is a constant $C_r > 0,$ independent of the step size $\tau_n$ such that
 \begin{equation}\label{4e5b}
\begin{split}
\tau_{n-1} \le \tau_n  \le C_r \, \tau_{n-1}, \quad \mbox{for} \; 2 \le n \le N. \\
\end{split}
\end{equation}
Now, for any function $w$ defined on $[0,T],$ we define $w^n :=w(t_n)$, for all $n= 0, 1,..., N$ and
\begin{equation}\label{4e12}
\begin{split}
\widehat{w}^n := \, \frac{\tau_{n-1} + \tau_{n}}{\tau_{n-1}} \, w^{n-1} - \frac{\tau_{n}}{\tau_{n-1}} \, w^{n-2}, \quad \mbox{for} \; \, 2 \le n \le N.\\
\end{split}
\end{equation}
The following lemma will be useful in the derivation of \textit{a priori} error estimate.
\begin{lemma}\label{4l3}
	Let $\delta \in (0,1) \cup (1,2)$. If $w(\cdot, t) \in C^1[0,T] \cap C^2(0,T]$ and $\|\partial^q_t w(\cdot,t)\|_{1} \, \le \,  C \, (1 + t^{\delta - q})$ for $q=0,1,2,3$, then
	\begin{equation}\label{4e13}
	| w^n - \widehat{w}^n | \, \le \, C \, N^{-min \left\lbrace 2, \, r \delta \right\rbrace }, \quad \mbox{for} \; \, 2 \le n \le N.\\
	\end{equation}	
\end{lemma}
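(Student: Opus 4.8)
The plan is to recognize $\widehat{w}^{\,n}$, defined in \eqref{4e12}, as the value at $t=t_n$ of the line through $(t_{n-2},w^{n-2})$ and $(t_{n-1},w^{n-1})$, so that $w^n-\widehat{w}^{\,n}$ is the error of a linear extrapolation, and to control that error by a Taylor remainder involving only $w'':=\partial_t^2 w$. Concretely, expanding $w^n$ and $w^{n-2}$ about $t_{n-1}$ with the integral form of the remainder and cancelling the common term $w'(t_{n-1})$, I would first establish, for every $2\le n\le N$,
\[
w^n-\widehat{w}^{\,n}
=\int_{t_{n-1}}^{t_n}(t_n-s)\,w''(s)\,ds
+\frac{\tau_n}{\tau_{n-1}}\int_{t_{n-2}}^{t_{n-1}}(s-t_{n-2})\,w''(s)\,ds .
\]
For $n\ge 3$ both integrals are over subintervals of $(0,T]$ where $w\in C^2$, so nothing needs checking; for $n=2$ the second integral equals $\frac{\tau_2}{\tau_1}\int_0^{t_1}s\,w''(s)\,ds$, and since the hypothesis with $q=2$ gives $\|w''(s)\|_1\le C\,(1+s^{\delta-2})$, the integrand obeys $s\,\|w''(s)\|_1\le C\,(s+s^{\delta-1})\in L^1(0,t_1)$ because $\delta>0$, so the identity is legitimate (via Fubini's theorem). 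Note that only the bound for $q=2$ enters; those for $q=0,1,3$ are not used in this lemma.

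For the generic case $n\ge 3$, I would take the $\|\cdot\|_1$-norm in the identity, use $s\ge t_{n-2}>0$ and $\delta-2<0$ to replace $s^{\delta-2}$ by $t_{n-2}^{\delta-2}$, invoke \eqref{4e5a} to pass from $t_{n-2}^{\delta-2}$ to $C\,t_{n-1}^{\delta-2}$, and \eqref{4e5b} to absorb $\tau_{n-1}$ into $\tau_n$, obtaining
\[
\|w^n-\widehat{w}^{\,n}\|_1\le C\,\tau_n^2\,\bigl(1+t_{n-1}^{\delta-2}\bigr).
\]
Then I would insert $\tau_n\le C\,T\,N^{-r}(n-1)^{r-1}$ from \eqref{4e5} and $t_{n-1}=T\,\bigl((n-1)/N\bigr)^{r}$: the ``$1$'' contributes $C\,N^{-2r}(n-1)^{2r-2}\le C\,N^{-2}$ (using $r\ge 1$ and $n-1\le N$), while the $t_{n-1}^{\delta-2}$ term simplifies to $C\,N^{-r\delta}(n-1)^{r\delta-2}$, which is $\le C\,N^{-2}$ if $r\delta\ge 2$ and $\le C\,N^{-r\delta}$ if $r\delta<2$; in either case it is $\le C\,N^{-\min\{2,r\delta\}}$.

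The start-up step $n=2$ is the delicate one. Here $\tau_1=t_1=T N^{-r}$ and $\tau_2=(2^r-1)T N^{-r}$, so $\tau_1$ and $\tau_2$ are both of order $N^{-r}$. The first integral is bounded by $C\,\tau_2^2(1+t_1^{\delta-2})$ exactly as above, and for the second I would estimate
\[
\frac{\tau_2}{\tau_1}\int_0^{t_1}s\,\|w''(s)\|_1\,ds
\le C\,\frac{\tau_2}{\tau_1}\int_0^{t_1}(s+s^{\delta-1})\,ds
\le C\,\frac{\tau_2}{\tau_1}\,t_1^{\delta}
=C\,\tau_2\,\tau_1^{\delta-1},
\]
where $t_1^2\le C\,t_1^{\delta}$ is valid since $t_1\le T$ and $\delta<2$. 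Substituting $\tau_1,\tau_2\le C N^{-r}$ turns these bounds into multiples of $N^{-2r}$ and $N^{-r\delta}$; since $r\ge 1$ gives $N^{-2r}\le N^{-2}$ and $\min\{2,r\delta\}\le r\delta$ gives $N^{-r\delta}\le N^{-\min\{2,r\delta\}}$, the estimate $\|w^2-\widehat{w}^{\,2}\|_1\le C\,N^{-\min\{2,r\delta\}}$ follows, completing \eqref{4e13}.

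The step I expect to require the most care is the $n=2$ case: the assumed weak singularity $\|w''(s)\|_1\le C(1+s^{\delta-2})$ means $w''$ need not even be integrable up to $s=0$ when $\delta\le 1$, so the classical pointwise remainder $\tfrac12 w''(\xi)(t_n-t_{n-2})(t_n-t_{n-1})$ is not available; keeping the remainder in integral form is what rescues the argument, because the harmless weight $s$ appearing in $\int_0^{t_1}s\,w''(s)\,ds$ restores integrability exactly when $\delta>0$, after which the graded-mesh relations $\tau_1,\tau_2$ of order $N^{-r}$ and $t_n=T(n/N)^r$ finish the estimate.
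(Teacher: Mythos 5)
Your proposal is correct and follows essentially the same route as the paper, whose proof is only the one-line remark that the result ``follows from application of Taylor's theorem, \eqref{4e5} and \eqref{4e5a}'': you have simply carried out that Taylor-plus-mesh-estimate argument in full, with the integral form of the remainder correctly handling the startup step $n=2$ where $w''$ may be non-integrable at $t=0$. The exact extrapolation identity, the separate treatment of the ``$1$'' and ``$t^{\delta-2}$'' contributions via \eqref{4e5} and \eqref{4e5a}, and the case split on $r\delta\gtrless 2$ all check out.
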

\begin{proof}
	Proof of this lemma follows from application of Taylor's theorem, \eqref{4e5} and \eqref{4e5a}.
\end{proof}
Now, for a function $w \in C[0,T] \cap C^3(0,T],$ the L1-approximation to  $^{c}_{0}{D}^{\beta}_{t_n} w$ on the graded mesh \cite{[Ch1hyp],[r02],[r11]} can be  given as:
\begin{equation}\label{4e9}
\begin{split}
^{c}_{0}{D}^{\beta}_{t_n} w \, \approx& \, D^{\beta} _{N} w^n
 := \, d_{n,1} \, w^n - d_{n,n} \, w^0 + \, \sum_{k=1}^{n-1} (d_{n,k+1} - d_{n,k}) \, w^{n-k}, \\
\end{split}
\end{equation}
where
\begin{equation}\label{4e10}
\begin{split}
d_{n,k} \, = \, \frac{(t_n - t_{n-k})^{1- \beta} - (t_n - t_{n-k+1})^{1- \beta}}{\Gamma {(2-\beta)} \, \tau_{n-k+1}}, \quad \mbox{for} \; \, 1 \le k \le n \le N.\\
\end{split}
\end{equation}
The following lemma gives the amount of error in the discretization of Caputo fractional derivative using $L1$ scheme on graded mesh.
\begin{lemma}\label{4l2}
	\cite{[Ch1hyp]} Let $\delta \in (0,1) \cup (1,2)$ and $\|\partial^q_t w(x,t)\|_{1} \, \le \,  C \, (1 + t^{\delta - q})$ for $q=0,1,2,3.$ Then there exists a constant $C>0$ such that
	\begin{equation}\label{4e11}
	\| ^{c}_{0}{D}^{\beta}_{t_n} w - D^{\beta} _{N} w^n \|_{1} \, \le \, C \, t^{-\beta}_n \, N^{-min \left\lbrace 2- \beta, \, r \delta \right\rbrace }, \quad \forall n=1, 2, ..., N.
	\end{equation}	
\end{lemma}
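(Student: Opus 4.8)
The plan is to read off from \eqref{4e9}--\eqref{4e10} that $D^{\beta}_{N}w^{n}$ is simply the Caputo $\beta$-derivative evaluated at $t_n$ of the continuous, piecewise-linear-in-time interpolant $\Pi w$ of $w$ on the mesh $0=t_0<\dots<t_N=T$. Writing $\Pi_{k}w$ for the linear interpolant of $w$ on $\{t_{k-1},t_k\}$ and $\psi_k:=w-\Pi_k w$ (so $\psi_k(t_{k-1})=\psi_k(t_k)=0$), and comparing with the definition \eqref{4e2a} of the Caputo derivative, one obtains the consistency identity
\[
{^{c}_{0}D^{\beta}_{t_n}}w-D^{\beta}_{N}w^{n}
=\frac{1}{\Gamma(1-\beta)}\sum_{k=1}^{n}\int_{t_{k-1}}^{t_k}(t_n-s)^{-\beta}\,\psi_k'(s)\,ds .
\]
The spatial variable rides along as a passive parameter; since every step below is linear in $w$, Minkowski's integral inequality lets me carry the $\|\cdot\|_1$-norm under all integrals and sums, so it suffices to estimate scalar quantities in which $\|w\|_1$, $\|w_t\|_1$, $\|w_{tt}\|_1$ are replaced by $C(1+t^{\delta})$, $C(1+t^{\delta-1})$, $C(1+t^{\delta-2})$. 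I then split the sum into the three groups $k=1$, $2\le k\le n-1$, and $k=n$.

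For $2\le k\le n-1$ the kernel $(t_n-\cdot)^{-\beta}$ is smooth on $[t_{k-1},t_k]$, so integrating by parts and using $\psi_k(t_{k-1})=\psi_k(t_k)=0$ together with the linear-interpolation estimate $\|\psi_k(\cdot,s)\|_1\le C\tau_k^{2}\max_{t\in[t_{k-1},t_k]}\|w_{tt}(\cdot,t)\|_1\le C\tau_k^{2}(1+t_{k-1}^{\delta-2})$ (valid since $t_{k-1}>0$ and $t\mapsto t^{\delta-2}$ decreases) gives
\[
\Big\|\int_{t_{k-1}}^{t_k}(t_n-s)^{-\beta}\psi_k'(s)\,ds\Big\|_1
\le C\,\tau_k^{2}\,(1+t_{k-1}^{\delta-2})\,\big[(t_n-t_k)^{-\beta}-(t_n-t_{k-1})^{-\beta}\big].
\]
For $k=n$ I keep the singular kernel and use the coarser $\|\psi_n'(\cdot,s)\|_1\le C\tau_n(1+t_{n-1}^{\delta-2})$, which after integrating $(t_n-s)^{-\beta}$ over $[t_{n-1},t_n]$ yields a contribution bounded by $C\tau_n^{2-\beta}(1+t_{n-1}^{\delta-2})$; inserting $\tau_n\le CTN^{-r}(n-1)^{r-1}$ from \eqref{4e5} and $t_n=T(n/N)^r$ with \eqref{4e5a}--\eqref{4e5b}, this is $O\big(t_n^{-\beta}N^{-\min\{2-\beta,\,r\delta\}}\big)$. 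For $k=1$ the $w_{tt}$-based bound is unavailable when $\delta<1$ (then $w_{tt}$ is not integrable at $0$), so I instead use $\psi_1(s)=\int_0^{s}w_t\,d\eta-\tfrac{s}{t_1}\int_0^{t_1}w_t\,d\eta$ and $\|w_t(\cdot,\eta)\|_1\le C(1+\eta^{\delta-1})$ to get $\|\psi_1(\cdot,s)\|_1\le C(t_1+t_1^{\delta})$ and $\|\psi_1'(\cdot,s)\|_1\le C(1+s^{\delta-1}+t_1^{\delta-1})$ (while for $\delta>1$ the $w_{tt}$-based bounds apply); feeding these into the representation — keeping the singular kernel when $n=1$, and when $n\ge2$ integrating by parts and using $(t_n-t_1)^{-\beta-1}\le Ct_n^{-\beta-1}$ via \eqref{4e5a} — and then substituting $t_1=TN^{-r}$ produces a $k=1$ contribution of order $t_n^{-\beta}N^{-r\delta}$.

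The main obstacle is summing the interior bound, $\sum_{k=2}^{n-1}C\tau_k^{2}(1+t_{k-1}^{\delta-2})\big[(t_n-t_k)^{-\beta}-(t_n-t_{k-1})^{-\beta}\big]$, which I would split at the index where $t_{k-1}\approx t_n/2$. For the early indices ($t_k\le t_n/2$) the kernel difference is mean-valued, $(t_n-t_k)^{-\beta}-(t_n-t_{k-1})^{-\beta}\le C\tau_k\,t_n^{-\beta-1}$, reducing the block to $C\,t_n^{-\beta-1}\sum_k\tau_k^{3}(1+t_{k-1}^{\delta-2})$, which by \eqref{4e5}, \eqref{4e5b}, comparison of the sum with $\int_0^{t_n}(1+\xi^{\delta-2})\,d\xi$, and $t_n=T(n/N)^r$ is $O\big(t_n^{-\beta}N^{-\min\{2-\beta,\,r\delta\}}\big)$. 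For the late indices ($t_{k-1}>t_n/2$) the mesh ratios are bounded, so $\tau_k\asymp\tau_n$ and $1+t_{k-1}^{\delta-2}\le C(1+t_n^{\delta-2})$ by \eqref{4e5a}--\eqref{4e5b}; pulling these factors out leaves a telescoping sum $\sum_k\big[(t_n-t_k)^{-\beta}-(t_n-t_{k-1})^{-\beta}\big]\le(t_n-t_{n-1})^{-\beta}=\tau_n^{-\beta}$, whence this block is $O\big(\tau_n^{2-\beta}(1+t_n^{\delta-2})\big)=O\big(t_n^{-\beta}N^{-\min\{2-\beta,\,r\delta\}}\big)$. It is precisely in this last balancing that the two competing rates surface — the exponent $2-\beta$ from quadrature in the region where the solution is smooth, and the exponent $r\delta$ from the initial layer near $t=0$ — so that taking their minimum yields \eqref{4e11}. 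Collecting the three groups completes the argument.
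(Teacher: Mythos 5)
The paper does not prove this lemma at all: it is imported verbatim by citation from \cite{[Ch1hyp]}, so there is no in-paper argument to compare against. Your reconstruction is the standard truncation-error analysis for the $L1$ scheme on a graded mesh (the identity $D^{\beta}_{N}w^{n}={}^{c}_{0}D^{\beta}_{t_n}(\Pi w)$, interpolation-error representation, and the split into $k=1$, interior $k$, and $k=n$ with the further split of the interior block at $t_n/2$), and this is essentially how the cited reference and its antecedents (Stynes--O'Riordan--Gracia, Kopteva) prove it; carrying the $\|\cdot\|_{1}$ norm through by Minkowski's inequality is exactly the right way to upgrade the scalar estimate to the form needed here. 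The one place where your sketch is too lossy is the early-index block: bounding $\sum_{k}\tau_k^{3}(1+t_{k-1}^{\delta-2})$ by $\max_k \tau_k^{2}$ times a comparison of $\sum_k\tau_k t_{k-1}^{\delta-2}$ with $\int_0^{t_n}\xi^{\delta-2}\,d\xi$ does not close when $\delta<1$ and $r$ is large, since the integral behaves like $t_1^{\delta-1}\sim N^{r(1-\delta)}$ and the uniform bound on $\tau_k^{2}$ wastes the smallness of the early steps; one must instead estimate term by term, $\tau_k^{3}t_{k-1}^{\delta-2}\le C N^{-r(1+\delta)}k^{r(1+\delta)-3}$, and sum with the cases $r(1+\delta)\gtrless 2$ treated separately (which is what the cited proofs do, and which does yield the claimed $O\bigl(t_n^{-\beta}N^{-\min\{2-\beta,\,r\delta\}}\bigr)$). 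With that repair the argument is complete and matches the source.
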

\bigskip
Let $U^n$, ${\overline{U}}^n$, $V^n$ denote the approximate value of $u$, $\overline{u}$ and $v$ at $t_n$, respectively, then for $n \ge 2,$ the linearized fully-discrete scheme for \eqref{4e3} is as follows: Find ${\overline{U}}^n,$ $V^n$ $\in X_h$ such that for all $\omega \in X_h,$
\begin{subequations}\label{4e18}
\begin{align}
  \label{4e18:a}
    \big(D^{\beta} _{N} V^n, \omega \big)  + a\big(l(\widehat{U}^n)\big) \, (\nabla {\overline{U}}^n, \nabla \omega) \, =& \, (f^n, \omega)  +  t_n \, a\big(l(\widehat{U}^n)\big) \, (\Delta u_1, \omega), \\
  \label{4e18:b}
    \big(D^{\beta} _{N} {\overline{U}}^n, \omega \big) - (V^n, \omega) \, =& \, 0,\\
  \label{4e18:c}
    U^0 = R_h u_0 \; \mbox{and} \; V^0 =& \, 0,
\end{align}
\end{subequations}
and  $U^1 \, = \, U^0 + \tau_1 \, P_hu_1.$ Note that one can find ${\overline{U}}^1$ and $V^1$ from the relations ${\overline{U}}^1 \, \, = \, U^1 \, - \, t_1 \ P_hu_1$ and $V^1 = \, D^{\beta} _{N} {\overline{U}}^1.$\\
In the following, we write problem \eqref{4e18} in matrix form. Using the definition of $D^{\beta} _{N}$ and putting $\omega=\phi_i$ ($1 \le i \le M$) in \eqref{4e18}, we have
\begin{equation}\label{4e19}
\begin{split}
&\big(d_{n,1} \, V^n, \, \phi_i \big) \, + \, \Big(-d_{n,n} \, V^0 + \sum_{k=1}^{n-1} \, (d_{n,k+1}-d_{n,k}) \, V^{n-k}, \, \phi_i \Big) \, \\
& \qquad \qquad + \, a\big(l(\widehat{U}^n)\big) \, (\nabla {\overline{U}}^n, \, \nabla \phi_i) \, = \, (f^n, \, \phi_i) \, + \, t_n \, a\big(l(\widehat{U}^n)\big) \, (\Delta u_1, \, \phi_i), \\
&\big(d_{n,1} \, {\overline{U}}^n, \, \phi_i \big) + \Big(-d_{n,n} \, {\overline{U}}^0 + \sum_{k=1}^{n-1} \, (d_{n,k+1}-d_{n,k}) \, {\overline{U}}^{n-k} , \, \phi_i \Big) - (V^n, \, \phi_i) \, = \, 0.\\
\end{split}
\end{equation}
Since ${\overline{U}}^n,$ $V^n \in X_h,$  we can write
\begin{equation}\label{4e20}
\begin{split}
   {\overline{U}}^n = \sum_{j=1}^{M} \, \alpha_j^n \, \phi_j \quad \mbox{and} \quad V^n = \sum_{j=1}^{M} \, \beta_j^n \, \phi_j, \quad \mbox{for some } \alpha_j^n, \, \beta_j^n \in \mathbb{R}.
\end{split}
\end{equation}
On substituting these values of ${\overline{U}}^n$ and $V^n$ in \eqref{4e19}, we can obtain the following matrix system.
\begin{equation}\label{4e21}
\begin{split}
  B \, \beta^n + \frac{1}{d_{n,1}} \, a\big(l(\widehat{U}^n)\big) \, A \, \alpha^n + \frac{1}{d_{n,1}} \, B \, G^n = \frac{1}{d_{n,1}} \, F^n + \frac{t_n}{d_{n,1}} \, a\big(l(\widehat{U}^n)\big) \, E,
\end{split}
\end{equation}
and
\begin{equation}\label{4e22}
\begin{split}
d_{n,1} \, B \, \alpha^n + B \, H^n = B \, \beta^n,
\end{split}
\end{equation}
where $B$ is the mass matrix, $A$ is the stiffness matrix, $\beta^n = [\beta_1^n \;\; \beta_2^n \;\; \dots \;\; \beta_M^n]^T,$ \ $\alpha^n = [\alpha_1^n \;\; \alpha_2^n \;\; \dots \;\; \alpha_M^n]^T$ and the elements of vectors $G^n = (G^n_{i1})_{1 \le i \le M},$ $H^n = (H^n_{i1})_{1 \le i \le M},$ $F^n = (F^n_{i1})_{1 \le i \le M},$ $E = (E_{i1})_{1 \le i \le M}$ are given as follows: for each $i=1,2,...,M,$
\begin{equation*}
\begin{split}
   &G^n_{i1} = \Big(-d_{n,n} \, V^0 + \sum_{k=1}^{n-1} \, (d_{n,k+1}-d_{n,k}) \, V^{n-k}, \, \phi_i \Big) \\
   &H^n_{i1} = \Big(-d_{n,n} \, {\overline{U}}^0 + \sum_{k=1}^{n-1} \, (d_{n,k+1}-d_{n,k}) \, {\overline{U}}^{n-k} , \, \phi_i \Big)\\
   &F^n_{i1} = (f^n, \, \phi_i)  \quad \mbox{and} \quad E_{i1} = (\Delta u_1, \, \phi_i).
\end{split}
\end{equation*}
By putting the value of $B \, \beta^n$ from \eqref{4e22} into \eqref{4e21}, we get
\begin{equation}\label{4e23a}
\begin{split}
d_{n,1} \, B \, \alpha^n + B \, H^n + \frac{1}{d_{n,1}} \, a\big(l(\widehat{U}^n)\big) \, A \, \alpha^n + \frac{1}{d_{n,1}} \, B \, G^n = \frac{1}{d_{n,1}} \, F^n + \frac{t_n}{d_{n,1}} \, a\big(l(\widehat{U}^n)\big) \, E,
\end{split}
\end{equation}
Therefore,\\
\begin{equation}\label{4e23b}
\begin{split}
\alpha^n = \Big(d_{n,1} \, B  + \frac{a\big(l(\widehat{U}^n)\big)}{d_{n,1}} \, A\Big)^{-1} \Big( \frac{1}{d_{n,1}} \, F^n + \frac{t_n}{d_{n,1}} \, a\big(l(\widehat{U}^n)\big) \, E - \frac{1}{d_{n,1}} \, B G^n - B H^n \Big),
\end{split}
\end{equation}
and
\begin{equation}\label{4e23c}
\begin{split}
\beta^n = B^{-1} \, \big( d_{n,1} \, B \, \alpha^n + B \, H^n \big) = d_{n,1} \, \alpha^n +H^n.
\end{split}
\end{equation}

\section{\textit{A priori} bound}
In this section, we derive \textit{a priori} bound for fully-discrete solution. In this regard, first we state the coercivity property of the $L1$ scheme in the following lemma.

\begin{lemma}\cite{[Ch1hyp],[ach1]} \label{4l6}
	Let a function $w^n = w( \cdot , \, t_n)$ be in $L^2(\Omega)$, for $n = 0, 1, . . . , N$. Then, one has
	\begin{equation}\label{4e29}
	\left( {D}^{\beta}_{N} w^n, \, w^n \right) \, \ge \, \frac{1}{2} \, {D}^{\beta}_{N} \|w^n\|^2, \quad \forall n=1, 2, ..., N.\\
	\end{equation}
\end{lemma}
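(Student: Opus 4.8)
The plan is to prove \eqref{4e29} by the standard elementary argument for the $L1$ kernel, using only the positivity and monotonicity of the coefficients $d_{n,k}$ from \eqref{4e10}. The first step is to record the structural facts I need. Carrying out the elementary integral in \eqref{4e10} gives the representation
\[
d_{n,k} \, = \, \frac{1}{\Gamma(1-\beta)\,\tau_{n-k+1}} \int_{t_{n-k}}^{t_{n-k+1}} (t_n - s)^{-\beta}\, ds, \qquad 1 \le k \le n,
\]
so that $d_{n,k}$ is the average of the positive, increasing function $s \mapsto (t_n - s)^{-\beta}/\Gamma(1-\beta)$ over $[t_{n-k}, t_{n-k+1}]$. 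As $k$ increases these intervals slide monotonically towards $0$ (each one lying to the left of the previous, sharing an endpoint), hence $d_{n,1} \ge d_{n,2} \ge \cdots \ge d_{n,n} > 0$; this holds for any mesh, in particular for the graded mesh $t_n = T(n/N)^r$. Consequently $d_{n,k+1} - d_{n,k} \le 0$ for $1 \le k \le n-1$, and by telescoping $\sum_{k=1}^{n-1}(d_{n,k+1} - d_{n,k}) = d_{n,n} - d_{n,1}$.

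Next I would form $E_n := 2\big(D^{\beta}_N w^n,\, w^n\big) - D^{\beta}_N \|w^n\|^2$ and expand both terms with \eqref{4e9}, applied to the sequences $\{w^k\}$ and $\{\|w^k\|^2\}$. After the $d_{n,1}\|w^n\|^2$ contributions partially cancel one is left with
\[
E_n \, = \, d_{n,1}\|w^n\|^2 \, - \, d_{n,n}\big(2(w^0,w^n) - \|w^0\|^2\big) \, + \, \sum_{k=1}^{n-1}(d_{n,k+1}-d_{n,k})\big(2(w^{n-k},w^n) - \|w^{n-k}\|^2\big).
\]
Now I invoke the elementary bound $2(w^{j},w^n) - \|w^{j}\|^2 \le \|w^n\|^2$, valid for every $j$ since it is merely $\|w^n - w^j\|^2 \ge 0$. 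Applied to the $w^0$-term (whose multiplier $d_{n,n}$ is positive) it gives $-d_{n,n}\big(2(w^0,w^n) - \|w^0\|^2\big) \ge -d_{n,n}\|w^n\|^2$; applied inside the sum, and using that the multipliers $d_{n,k+1}-d_{n,k}$ are $\le 0$ (so the inequality reverses), it gives $(d_{n,k+1}-d_{n,k})\big(2(w^{n-k},w^n)-\|w^{n-k}\|^2\big) \ge (d_{n,k+1}-d_{n,k})\|w^n\|^2$. Summing,
\[
E_n \, \ge \, \Big(d_{n,1} - d_{n,n} + \sum_{k=1}^{n-1}(d_{n,k+1}-d_{n,k})\Big)\|w^n\|^2 \, = \, 0
\]
by the telescoping identity. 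Dividing by $2$ yields \eqref{4e29}.

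The computation itself is short; the only point that needs attention is respecting the sign of $d_{n,k+1}-d_{n,k}$ when applying the elementary inequality, everything else being bookkeeping. The one genuinely analytic ingredient is the monotonicity chain $d_{n,1} \ge \cdots \ge d_{n,n} > 0$, and the main thing to verify is that it is not spoiled by the nonuniformity of the graded mesh — which it is not, since the integral-average representation of $d_{n,k}$ uses nothing about the step sizes. In the write-up I would either deduce that monotonicity directly from the representation above or simply cite it from \cite{[Ch1hyp],[ach1]}, after which the two displays constitute the whole proof.
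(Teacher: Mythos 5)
Your proof is correct. The paper itself supplies no argument for this lemma --- it is stated with a citation to \cite{[Ch1hyp],[ach1]} --- and what you have written is precisely the standard proof from those sources: the integral-average representation of $d_{n,k}$ (valid since $\Gamma(2-\beta)=(1-\beta)\Gamma(1-\beta)$ and $\beta=\alpha/2\in(\tfrac12,1)$) gives the monotonicity $d_{n,1}\ge\cdots\ge d_{n,n}>0$ on any mesh, and the inequality $2(w^j,w^n)-\|w^j\|^2=\|w^n\|^2-\|w^n-w^j\|^2\le\|w^n\|^2$, applied with the correct sign of each multiplier, reduces everything to the telescoping identity. The one point worth making explicit in a write-up is why the average of $s\mapsto(t_n-s)^{-\beta}$ over $[t_{n-k-1},t_{n-k}]$ is below its average over $[t_{n-k},t_{n-k+1}]$: the former is at most the value at the shared endpoint $t_{n-k}$, which is at most the latter, because the function is increasing; otherwise the argument is complete as it stands, including the edge case $n=1$ where the sum is empty.
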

\bigskip
Also, for each $n = 1, 2, \dots, N,$ we define the coefficients $Q^{(n)}_{n-i}$ as follows \cite{[r15],[r5]}:
\begin{equation}\label{4e24}
\begin{split}
Q^{(n)}_{n-i} :=  \left\{\begin{array}{l} \frac{1}{d_{i,0}} \sum_{k=i+1}^{n} \left( d_{k, k-i-1} - d_{k, k-i} \right) Q^{(n)}_{n-k}, \; \, \mbox{if} \; i=1, 2,...,(n-1), \\
\\
\frac{1}{d_{n,0}}, \quad \mbox{if} \; \, i=n. \\
\end{array}\right.
\end{split}
\end{equation}	

\begin{lemma}\label{4l4}
  \cite{[r15],[r5]} For a constant $\gamma \in (0,1),$ the coefficients $Q^{(n)}_{n-i}$ defined in \eqref{4e24} satisfy
  \begin{equation}\label{4e25}
  \begin{split}
     \sum_{i=1}^{n} Q^{(n)}_{n-i} \, i^{r(\gamma - \beta)} \, \le \, \frac{\Gamma {(1+ \gamma - \beta)}}{\Gamma {(1+ \gamma)}} \, T^{\beta} \, \Big(\frac{t_n}{T}\Big)^{\gamma} \, N^{{r(\gamma - \beta)}}, \quad \mbox{for} \;\; 1 \le n \le N.
  \end{split}
  \end{equation}
\end{lemma}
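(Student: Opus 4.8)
The plan is to recognise the coefficients $Q^{(n)}_{n-i}$ defined in \eqref{4e24} as the complementary discrete convolution kernels of the $L1$ operator $D^{\beta}_{N}$, and then to compare discretely against the exact fractional-integral identity $\frac{1}{\Gamma(\beta)}\int_{0}^{t}(t-s)^{\beta-1}s^{\gamma-\beta}\,ds=\frac{\Gamma(1+\gamma-\beta)}{\Gamma(1+\gamma)}\,t^{\gamma}$ for the power function $t^{\gamma}$. First I would rewrite $D^{\beta}_{N}w^{n}$ from \eqref{4e9} in summation-by-parts form, $D^{\beta}_{N}w^{n}=\sum_{j=1}^{n}b^{(n)}_{n-j}(w^{j}-w^{j-1})$ with $b^{(n)}_{m}=d_{n,m+1}$, and observe from \eqref{4e10} that $b^{(k)}_{k-j}=\frac{1}{\Gamma(1-\beta)\,\tau_{j}}\int_{t_{j-1}}^{t_{j}}(t_{k}-s)^{-\beta}\,ds$ is positive and, being the mean of the increasing function $(t_{k}-s)^{-\beta}$ over a subinterval, non-increasing in $m$ on an arbitrary mesh. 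A downward induction on $i$ based on \eqref{4e24} then delivers two facts: $Q^{(n)}_{n-i}\ge 0$, and the complementary identity $\sum_{k=i}^{n}Q^{(n)}_{n-k}\,b^{(k)}_{k-i}=1$ for $1\le i\le n$. From the latter, after interchanging the order of summation and telescoping, any grid function $g$ with $g^{0}=0$ satisfies the inversion formula $g^{n}=\sum_{k=1}^{n}Q^{(n)}_{n-k}\,D^{\beta}_{N}g^{k}$.

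Next I would prove the key pointwise estimate that, for $\gamma\in(0,1)$, the $L1$ formula over-estimates the Caputo derivative of $t^{\gamma}$, namely
\begin{equation*}
D^{\beta}_{N}\big[t_{j}^{\gamma}\big]\ \ge\ {}^{c}_{0}D^{\beta}_{t}\big[t^{\gamma}\big]\big|_{t=t_{j}}=\frac{\Gamma(1+\gamma)}{\Gamma(1+\gamma-\beta)}\,t_{j}^{\gamma-\beta},\qquad 1\le j\le N.
\end{equation*}
Writing $\overline{\delta}_{k}=(t_{k}^{\gamma}-t_{k-1}^{\gamma})/\tau_{k}$ for the (constant) slope of the piecewise-linear interpolant of $t^{\gamma}$ on $[t_{k-1},t_{k}]$, the difference of the two sides equals $\frac{1}{\Gamma(1-\beta)}\sum_{k=1}^{j}\int_{t_{k-1}}^{t_{k}}(t_{j}-s)^{-\beta}\big(\overline{\delta}_{k}-\gamma s^{\gamma-1}\big)\,ds$. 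On each subinterval $\int_{t_{k-1}}^{t_{k}}\big(\overline{\delta}_{k}-\gamma s^{\gamma-1}\big)\,ds=0$, and since $\gamma s^{\gamma-1}$ is strictly decreasing (here $\gamma-1<0$) the factor $\overline{\delta}_{k}-\gamma s^{\gamma-1}$ changes sign exactly once, from negative to positive, at some $\xi_{k}\in(t_{k-1},t_{k})$; as the kernel $(t_{j}-s)^{-\beta}$ is strictly increasing in $s$, subtracting the zero-contribution constant $(t_{j}-\xi_{k})^{-\beta}$ turns the integrand into a product of two like-signed quantities, hence nonnegative, including on the singular subinterval $k=j$ (there the singularity sits where $\overline{\delta}_{j}-\gamma s^{\gamma-1}\ge 0$, so it only helps). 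This establishes the displayed inequality on every mesh.

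Finally I would combine the two steps. Taking $w^{j}:=\frac{\Gamma(1+\gamma-\beta)}{\Gamma(1+\gamma)}\,t_{j}^{\gamma}$, so that $w^{0}=0$ and, by the super-solution estimate, $D^{\beta}_{N}w^{j}\ge t_{j}^{\gamma-\beta}$, the inversion formula together with $Q^{(n)}_{n-k}\ge 0$ gives
\begin{equation*}
\frac{\Gamma(1+\gamma-\beta)}{\Gamma(1+\gamma)}\,t_{n}^{\gamma}=w^{n}=\sum_{k=1}^{n}Q^{(n)}_{n-k}\,D^{\beta}_{N}w^{k}\ \ge\ \sum_{k=1}^{n}Q^{(n)}_{n-k}\,t_{k}^{\gamma-\beta}.
\end{equation*}
Substituting $t_{k}=T(k/N)^{r}$ — so that $t_{k}^{\gamma-\beta}=T^{\gamma-\beta}N^{-r(\gamma-\beta)}k^{r(\gamma-\beta)}$ and $t_{n}^{\gamma}=T^{\gamma}(t_{n}/T)^{\gamma}$ — and multiplying through by $T^{\beta-\gamma}N^{r(\gamma-\beta)}$ converts this into exactly \eqref{4e25}. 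The main obstacle is the middle step, proving that the $L1$ discretisation of ${}^{c}_{0}D^{\beta}_{t}[t^{\gamma}]$ is a super-solution of the continuous operator; the sign-change/rearrangement argument above settles it uniformly in $n$, $N$ and $\alpha$, which is precisely the source of the $\alpha$-robustness of \eqref{4e25}, while the graded-mesh properties \eqref{4e5}--\eqref{4e5b} and the monotonicity of the $b$-weights are needed only for the non-negativity of the $Q$-kernels in the first step. Alternatively one may simply invoke this super-solution property and the complementary-kernel identity from \cite{[r15],[r5]}.
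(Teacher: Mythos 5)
Your proposal is correct, but note that the paper itself offers no proof of this lemma at all: it is imported verbatim, definition \eqref{4e24} included, from \cite{[r15]} and \cite{[r5]}, so there is no internal argument to compare against. What you have done is reconstruct, accurately, the standard complementary-discrete-kernel argument of Liao--McLean--Zhang and Chen--Stynes. All three pillars check out: (i) the summation-by-parts form $D^{\beta}_{N}w^{n}=\sum_{j=1}^{n}d_{n,n-j+1}(w^{j}-w^{j-1})$ does match \eqref{4e9}, the weights $d_{k,k-j+1}=\frac{1}{\Gamma(1-\beta)\tau_{j}}\int_{t_{j-1}}^{t_{j}}(t_{k}-s)^{-\beta}ds$ are indeed averages of an increasing kernel and hence monotone in the lag index, and the downward induction gives $Q^{(n)}_{n-k}\ge 0$ together with $\sum_{k=i}^{n}Q^{(n)}_{n-k}d_{k,k-i+1}=1$ and the inversion formula (here you are silently correcting an off-by-one in the paper's \eqref{4e24}, where $d_{i,0}$ falls outside the index range of \eqref{4e10}; your reading is the intended one); (ii) the super-solution property $D^{\beta}_{N}[t_{j}^{\gamma}]\ge \frac{\Gamma(1+\gamma)}{\Gamma(1+\gamma-\beta)}t_{j}^{\gamma-\beta}$ is correctly proved by your single-sign-change/rearrangement argument --- subtracting $(t_{j}-\xi_{k})^{-\beta}$ costs nothing because each subinterval contributes zero mean, and the two singular subintervals ($s\to 0^{+}$ where $\gamma s^{\gamma-1}$ blows up, $s\to t_{j}^{-}$ where the kernel blows up) remain integrable and sign-favourable; (iii) the substitution $t_{k}=T(k/N)^{r}$ converts $\sum_{k}Q^{(n)}_{n-k}t_{k}^{\gamma-\beta}\le\frac{\Gamma(1+\gamma-\beta)}{\Gamma(1+\gamma)}t_{n}^{\gamma}$ into exactly \eqref{4e25}, uniformly in $\beta$, which is where the $\alpha$-robustness of the constant comes from. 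So your argument is a complete, self-contained replacement for the citation; the only thing the cited works add is that the same kernel machinery works for more general nonuniform meshes and discretisations, whereas your step (ii) as written is specific to the $L1$ weights.
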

The following discrete fractional Gr$\ddot{{o}}$nwall inequality will be useful in the derivation of \textit{a priori} bound and \textit{a priori} error estimate for the fully-discrete solution.
\begin{lemma}\label{4l5}
	\cite{[sfor1]} Let $(g^n)^{N}_{n=1}$ and $(\lambda_i)^{N-1}_{i=0}$ be given nonnegative sequences. Assume that there exists a constant $\Lambda$ such that $ \Lambda \ge \sum_{i=0}^{N-1} \lambda_i$ and the maximum step size satisfies
	\begin{equation}\label{4e26}
	\begin{split}
	  \max_{1 \le n \le N} \, \tau_n \, \le \, \frac{1}{\sqrt[\beta]{4 \, \Gamma (2- \beta) \, \Lambda}}.
	\end{split}
	\end{equation}
	Then, for any nonnegative sequences $(\chi^k)^{N}_{k=0}$ and $(\psi^k)^{N}_{k=0}$ satisfying
	\begin{equation}\label{4e27}
	\begin{split}
	  D^{\beta} _{N} \big[ (\chi^n)^2 + (\psi^n)^2 \big] \, \le \, \sum_{k=1}^{N} \lambda_{n-k} \, \big( \chi^k + \psi^k \big)^2 \, + \, \big( \chi^n + \psi^n \big) \, g^n, \quad 1 \le n \le N,
	\end{split}
	\end{equation}
	it holds that
	\begin{equation}\label{4e28}
	\begin{split}
	  \chi^n + \psi^n \, \le \, 4 \, E_{\beta} \big(4 \, \Lambda \, t_n^{ \beta}\big) \: \Big( \chi^0 \, + \, \psi^0 \, + \, \max_{1 \le k \le n} \, \sum_{i=1}^{k} Q^{(k)}_{k-i} \, g^i \Big),  \quad  \mbox{for} \;\; 1 \le n \le N,
	\end{split}
	\end{equation}
	where $E_{\beta} (z) \, = \, \sum_{k=0}^{\infty} \, \frac{z^k}{\Gamma {(1 + k \beta)}}$ is the Mittag-Leffler function.
\end{lemma}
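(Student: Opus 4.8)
\noindent To prove Lemma~\ref{4l5}, the plan is to collapse the coupled inequality \eqref{4e27} to a scalar discrete fractional Gr\"onwall inequality and then run the complementary-kernel machinery attached to \eqref{4e24} (cf.\ \cite{[sfor1],[r15],[r5]}). Set $w^n:=(\chi^n)^2+(\psi^n)^2\ge 0$. Since $\chi^n,\psi^n,g^n\ge 0$, we have $(\chi^k+\psi^k)^2\le 2w^k$ and $\chi^n+\psi^n\le\sqrt{2w^n}$, so \eqref{4e27} yields
\begin{equation*}
D^{\beta}_{N}w^n\ \le\ 2\sum_{k=1}^{n}\lambda_{n-k}\,w^k\ +\ \sqrt{2w^n}\;g^n,\qquad 1\le n\le N.
\end{equation*}
Because $\sqrt{w^0}\le\chi^0+\psi^0$ and $\chi^n+\psi^n\le\sqrt{2w^n}$, it then suffices to show $\sqrt{w^n}\le 2\sqrt2\,E_{\beta}(4\Lambda t_n^{\beta})\,\big(\sqrt{w^0}+\mathcal G_n\big)$, where $\mathcal G_n:=\max_{1\le k\le n}\sum_{i=1}^{k}Q^{(k)}_{k-i}\,g^i$.

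Next I would invoke two structural facts about the complementary kernel $Q^{(n)}_{n-i}$ of \eqref{4e24}: its entries are nonnegative, and it inverts the $L1$ operator, $\sum_{j=1}^{m}Q^{(m)}_{m-j}\,D^{\beta}_{N}v^j=v^m-v^0$ for every grid function $(v^j)$ and every $m\le N$ (both follow from the positivity and monotonicity of the coefficients $d_{n,k}$ in \eqref{4e10} and the recursion \eqref{4e24}). Applying $\sum_{j=1}^{m}Q^{(m)}_{m-j}(\cdot)$ to the scalar inequality above gives the discrete fractional integral inequality
\begin{equation*}
w^m\ \le\ w^0\ +\ 2\sum_{j=1}^{m}Q^{(m)}_{m-j}\sum_{k=1}^{j}\lambda_{j-k}\,w^k\ +\ \sqrt2\sum_{j=1}^{m}Q^{(m)}_{m-j}\sqrt{w^j}\;g^j,\qquad 1\le m\le N.
\end{equation*}
Writing $\Phi^m:=\max_{0\le k\le m}\sqrt{w^k}$, bounding $\sqrt{w^j}\le\Phi^m$ and $\sum_{k=1}^{j}\lambda_{j-k}\le\Lambda$, using Lemma~\ref{4l4} with $\gamma=\beta\in(0,1)$ so that $\sum_{j=1}^{m}Q^{(m)}_{m-j}\le t_m^{\beta}/\Gamma(1+\beta)$, and evaluating the integral inequality at an index $m\le n$ where $w^m=(\Phi^n)^2$ (the case $w^0=(\Phi^n)^2$ being trivial), I obtain the quadratic inequality
\begin{equation*}
(\Phi^n)^2\ \le\ w^0\ +\ \frac{2\Lambda t_n^{\beta}}{\Gamma(1+\beta)}\,(\Phi^n)^2\ +\ \sqrt2\,\Phi^n\,\mathcal G_n .
\end{equation*}

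Solving this quadratic yields the target estimate outright whenever $2\Lambda t_n^{\beta}/\Gamma(1+\beta)<1$, i.e.\ for small $t_n$; the step-size restriction \eqref{4e26} guarantees that at least the first time level is of this type. The hard part is to remove this smallness requirement and obtain a bound valid for all $t_n\in(0,T]$, which is where the Mittag--Leffler factor enters. I would handle it by a strong induction on $n$ (equivalently, a time-blocking argument): substitute the bound already established for $w^k$, $k<n$, back into the integral inequality, absorb the diagonal self-interaction term $\sqrt2\,Q^{(n)}_{0}\sqrt{w^n}\,g^n$ into the left-hand side using \eqref{4e26}, and show by induction on the iteration index $\ell$ that the $\ell$-th iterate contributes at most a term of size $(4\Lambda t_n^{\beta})^{\ell}/\Gamma(1+\ell\beta)$ times $(\sqrt{w^0}+\mathcal G_n)^2$ — each step picking up a bounded factor from Lemma~\ref{4l4} (restricting the relevant exponent to $(0,1)$ by blocking when needed), so that summing the geometric-type series over $\ell$ produces $E_{\beta}(4\Lambda t_n^{\beta})^2$. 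I expect the genuine obstacle to be the bookkeeping of the multiplicative constants through this iteration rather than any individual estimate; and since the lemma is quoted from \cite{[sfor1]}, in practice I would spell out the scalar reduction and the integral inequality and cite \cite{[sfor1]} for the iteration that produces the $E_{\beta}$ factor.
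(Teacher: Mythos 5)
The paper offers no proof of this lemma at all: it is stated verbatim as a quoted result of \cite{[sfor1]}, so there is no in-paper argument to compare against. Your sketch follows the route that the cited source (and its antecedents \cite{[r5],[r15]}) actually takes: set $w^n=(\chi^n)^2+(\psi^n)^2$, use $(\chi^k+\psi^k)^2\le 2w^k$ and $\chi^n+\psi^n\le\sqrt{2w^n}$ to collapse \eqref{4e27} to the scalar inequality $D^{\beta}_{N}w^n\le\sum_k(2\lambda_{n-k})w^k+\sqrt{w^n}(\sqrt2\,g^n)$, and then invoke the scalar discrete fractional Gr\"onwall lemma of Liao--McLean--Zhang; the doubled $\Lambda$ explains both the $4\Lambda$ inside $E_\beta$ and the constant in the step-size restriction \eqref{4e26}, and the factors $\sqrt2\cdot 2\sqrt2=4$ recover the constant $4$ in \eqref{4e28}, so your bookkeeping of constants is consistent. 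The complementary-kernel facts you use (nonnegativity of $Q^{(n)}_{n-i}$, the inversion identity $\sum_{j}Q^{(m)}_{m-j}D^{\beta}_{N}v^j=v^m-v^0$, and the bound $\sum_jQ^{(m)}_{m-j}\le t_m^{\beta}/\Gamma(1+\beta)$ from Lemma \ref{4l4} with $\gamma=\beta$) are all correct. The one genuine gap is the step you yourself flag: the passage from the quadratic inequality, which only closes when $2\Lambda t_n^{\beta}/\Gamma(1+\beta)<1$, to the global-in-time bound carrying the Mittag--Leffler factor. That iteration/induction is the technical heart of the lemma and is only outlined, not executed; but since you explicitly defer to \cite{[sfor1]} for it --- exactly as the paper does for the entire lemma --- the proposal is an acceptable and somewhat more informative substitute for the paper's bare citation.
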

Now we derive the \textit{a priori} bound for the fully-discrete solution.
\begin{theorem}\label{4th1}
  For each $n=2,3, \dots, N$, the fully-discrete solutions ${\overline{U}}^n$ and $V^n$ satisfy
  \begin{equation}\label{4e30}
  \begin{split}
    \| V^n \| + \| \nabla {\overline{U}}^n \| \le C \, \big(1 + \| V^0 \| + \| \nabla {\overline{U}}^0 \|\big).
  \end{split}
  \end{equation}	
\end{theorem}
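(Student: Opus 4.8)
The plan is to run a discrete energy argument directly on the fully--discrete weak form \eqref{4e18}, reduce the outcome to the hypothesis of the discrete fractional Gr\"onwall inequality (Lemma \ref{4l5}), and then read off \eqref{4e30} from the resulting Mittag--Leffler/$Q$--coefficient bound. Throughout, $n$ runs over $2 \le n \le N$; the degenerate cases $n = 0, 1$ are immediate from $V^0 = 0$, $\overline{U}^0 = R_h u_0$, and the relations defining $\overline{U}^1, V^1$. I abbreviate $a_n := a\big(l(\widehat{U}^n)\big)$, which, crucially, is a \emph{fixed} scalar at step $n$ (the scheme is linearized) with $m_1 \le a_n \le m_2$ by hypothesis H1.

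First I would take $\omega = V^n$ in \eqref{4e18:a}. Since \eqref{4e18:b} holds for every $\omega \in X_h$ and $V^n, D^\beta_N \overline{U}^n \in X_h$, we have $V^n = D^\beta_N\overline{U}^n$ in $X_h$, hence $\nabla V^n = D^\beta_N \nabla\overline{U}^n$ because $D^\beta_N$ is a finite linear combination of time levels. Therefore the diffusion term becomes $a_n(\nabla\overline{U}^n,\nabla V^n) = a_n(D^\beta_N\nabla\overline{U}^n,\nabla\overline{U}^n)$, to which I apply the $L1$ coercivity property (Lemma \ref{4l6}); Lemma \ref{4l6} is likewise applied to $(D^\beta_N V^n, V^n)$. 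Using H1 to handle the factor $a_n \ge m_1 > 0$ (this is the one delicate algebraic step, since $a_n$ sits inside the $L1$ operator acting on $\|\nabla\overline{U}^n\|^2$), and Cauchy--Schwarz together with $t_n \le T$ and $\|\Delta u_1\| < \infty$ (H3) on the right-hand side, I arrive at an inequality of the form
\[
\tfrac12 D^\beta_N\|V^n\|^2 + \tfrac{m_1}{2}D^\beta_N\|\nabla\overline{U}^n\|^2 \;\le\; \big(\|f^n\| + T\,m_2\,\|\Delta u_1\|\big)\,\|V^n\|.
\]
Setting $\chi^n := \|V^n\|$ and $\psi^n := \sqrt{m_1}\,\|\nabla\overline{U}^n\|$ and doubling, this reads $D^\beta_N[(\chi^n)^2+(\psi^n)^2] \le (\chi^n+\psi^n)\,g^n$ with $g^n := 2\|f^n\| + 2T m_2\|\Delta u_1\|$ and all $\lambda_i = 0$, so that $\Lambda$ may be taken to be an arbitrarily small positive constant and \eqref{4e26} holds as soon as the mesh is fine enough. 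This is precisely the hypothesis \eqref{4e27} of Lemma \ref{4l5}.

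Invoking Lemma \ref{4l5} then gives
\[
\|V^n\| + \sqrt{m_1}\,\|\nabla\overline{U}^n\| \;\le\; 4\,E_\beta\!\big(4\Lambda t_n^\beta\big)\Big(\|V^0\| + \sqrt{m_1}\,\|\nabla\overline{U}^0\| + \max_{1 \le k \le n}\sum_{i=1}^{k} Q^{(k)}_{k-i}\,g^i\Big).
\]
Here $E_\beta(4\Lambda t_n^\beta) \le E_\beta(4\Lambda T^\beta)$ is finite and, depending continuously on $\beta \in [\tfrac12,1]$, stays bounded as $\alpha \to 1^+$ and $\alpha \to 2^-$ (this is the source of $\alpha$-robustness). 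The convolution sum is controlled by Lemma \ref{4l4}: choosing $\gamma = \beta \in (0,1)$ there yields $\sum_{i=1}^k Q^{(k)}_{k-i} \le t_k^\beta/\Gamma(1+\beta) \le C$, whence $\max_{k}\sum_i Q^{(k)}_{k-i}g^i \le C$ by the boundedness of $t_n$ and the assumed regularity of $f$ (and H3). Dividing by $\sqrt{m_1}>0$ and absorbing the initial data into the generic constant delivers \eqref{4e30}.

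The genuine obstacle is the nonlocal, solution-dependent coefficient $a\big(l(\widehat{U}^n)\big)$, the one feature missing from the semilinear reduction in \cite{[sfor1]}. What makes the argument go through is exactly the linearization: because the diffusion coefficient is frozen at the \emph{already computed} extrapolation $\widehat{U}^n$, it is a true positive constant at each step, so the two-sided bound H1 lets it be handled within the $L1$ coercivity estimate and absorbed into $C$ (H2 is needed only later, for the error analysis, not for this bound). The remaining work is bookkeeping: checking that the two coupled estimates really collapse into the single scalar recursion \eqref{4e27}, verifying \eqref{4e26}, and confirming that every constant that appears — the $L1$ coercivity constant, the Mittag--Leffler factor, and the $Q$-coefficient sum — is independent of $\alpha$, $h$, and $N$.
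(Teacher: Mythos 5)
Your argument is correct and follows essentially the same route as the paper's proof: test \eqref{4e18:a} with $V^n$, eliminate the cross term $a\big(l(\widehat{U}^n)\big)(\nabla\overline{U}^n,\nabla V^n)$ using \eqref{4e18:b}, apply the $L1$ coercivity of Lemma \ref{4l6} together with H1, and close with the discrete fractional Gr\"onwall inequality of Lemma \ref{4l5} (all $\lambda_i=0$) and the $Q$-coefficient bound of Lemma \ref{4l4} with $\gamma=\beta$. The only cosmetic difference is that you exploit the pointwise identity $V^n=D^{\beta}_{N}\overline{U}^n$ in $X_h$ to rewrite the diffusion term, whereas the paper tests \eqref{4e18:b} with $\omega=-a\big(l(\widehat{U}^n)\big)\Delta_h\overline{U}^n$ and uses the definition of $\Delta_h$; both manipulations produce the identical energy identity \eqref{4e34}, and even the step you flag as delicate (replacing $a\big(l(\widehat{U}^n)\big)$ by $m_1$ inside the $D^{\beta}_{N}$ term) is treated exactly as in the paper.
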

\begin{proof}
  We choose $\omega=V^n$ in \eqref{4e18:a} and $\omega= - a\big(l(\widehat{U}^n)\big) \, \Delta_h {\overline{U}}^n$ in \eqref{4e18:b} to get
  \begin{equation}\label{4e31}
  	\begin{split}
  	  \big(D^{\beta} _{N} V^n, \, V^n \big) \, + \, a\big(l(\widehat{U}^n)\big) \, (\nabla {\overline{U}}^n, \, \nabla V^n) \, =& \, (f^n, \, V^n) \, + \, t_n \, a\big(l(\widehat{U}^n)\big) \, (\Delta u_1, \, V^n),\\
  	\end{split}
  \end{equation}
  and
  \begin{equation}\label{4e32}
  \begin{split}
  	- a\big(l(\widehat{U}^n)\big) \, \big(D^{\beta} _{N} {\overline{U}}^n, \, \Delta_h {\overline{U}}^n \big) +  a\big(l(\widehat{U}^n)\big) \, (V^n, \, \Delta_h {\overline{U}}^n) \, =& \, 0.
  	\end{split}
  \end{equation}
  Using the definition of $\Delta_h$ in \eqref{4e32}, we can get
  \begin{equation}\label{4e33}
  \begin{split}
     a\big(l(\widehat{U}^n)\big) \, \big( \nabla D^{\beta} _{N} {\overline{U}}^n, \, \nabla {\overline{U}}^n \big) -  a\big(l(\widehat{U}^n)\big) \, (\nabla V^n, \, \nabla {\overline{U}}^n) \, =& \, 0.
  \end{split}
  \end{equation}
  On adding \eqref{4e31} and \eqref{4e33}, we have
  \begin{equation}\label{4e34}
  \begin{split}
  \big(D^{\beta} _{N} V^n, \, V^n \big) \, +  a\big(l(\widehat{U}^n)\big) \, \big( \nabla D^{\beta} _{N} {\overline{U}}^n, \, \nabla {\overline{U}}^n \big) \, =& \, (f^n, \, V^n) \, + \, t_n \, a\big(l(\widehat{U}^n)\big) \, (\Delta u_1, \, V^n).\\
  \end{split}
  \end{equation}
  Using Lemma \ref{4l6}, bound of $a,$ Cauchy-Schwarz inequality and the fact that $t_n \le T$ in \eqref{4e34}, we can arrive at
  \begin{equation}\label{4e35}
  \begin{split}
  D^{\beta} _{N} \| V^n \|^2  \, + \, m_1 \, D^{\beta} _{N} \| { \nabla \overline{U}}^n \|^2 \, \le& \, \big(\| f^n \| \, + \, T \, m_2 \, \| \Delta u_1 \| \big) \, \| V^n \|.\\
  \end{split}
  \end{equation}
  Therefore,
  \begin{equation}\label{4e36}
  \begin{split}
  D^{\beta} _{N} \big( \| V^n \|^2  \, + \, m_1 \, \| { \nabla \overline{U}}^n \|^2 \big) \, \le& \, \big(\| f^n \| \, + \, T \, m_2 \, \| \Delta u_1 \| \big) \, \big( \| V^n \| + \sqrt{m_1} \, \| { \nabla \overline{U}}^n \| \big).\\
  \end{split}
  \end{equation}
  An application of Lemma \ref{4l5} in \eqref{4e36} $\Big(\mbox {with}$ $\chi^n = \| V^n \|,$ $\psi^n = \sqrt{m_1} \, \| { \nabla \overline{U}}^n\|,$ $\lambda_i = 0,$ for $\forall i = 0, 1, \dots, N-1,$ $g^n = \| f^n \| \, + \, T \, m_2 \, \| \Delta u_1 \|$ and taking $\Lambda = 1 \Big)$ gives
  \begin{equation}\label{4e37}
  \begin{split}
  \| V^n \|  \, + \, \sqrt{m_1} \, \| { \nabla \overline{U}}^n \|& \, \le \, 4 \, E_{\beta} \big( 4 \, t_n^{ \beta} \big) \\ &\Big[ \|V^0\| + \sqrt{m_1} \, \| {\nabla \overline{U}}^0 \| + \max_{1 \le k \le n} \big( \|f^k\| + T m_2 \|\Delta u_1 \| \big) \sum_{i=1}^{k} Q^{(k)}_{k-i} \Big]. \\
  \end{split}
  \end{equation}
 Using Hypothesis H3 and Lemma \ref{4l4} (with $\gamma = \beta$) in \eqref{4e37}, one can obtain
 \begin{equation}\label{4e38}
 \begin{split}
 \| V^n \|  \, + \, \sqrt{m_1} \, \| { \nabla \overline{U}}^n \|& \, \le  \, C \, \big( 1 + \|V^0\| +  \| {\nabla \overline{U}}^0 \| \big),
 \end{split}
 \end{equation}
 where $C = 4 E_{\beta} \big( 4 \, t_n^{ \beta} \big) \, \max \big\{1, \, \sqrt{m_1}, \, \frac{T^{\beta}}{\Gamma {( 1 + \beta)}} \big\}.$\\
 Therefore,
 \begin{equation}\label{4e38a}
 \begin{split}
 \| V^n \|  \, + \| { \nabla \overline{U}}^n \|& \, \le  \, C \, \big( 1 + \|V^0\| +  \| {\nabla \overline{U}}^0 \| \big).
 \end{split}
 \end{equation}
 This completes the proof.
\end{proof}
\begin{remark}\label{4rmk2}
	Using \eqref{4e30}, we can find a bound for $\| \nabla U^n \|$ as follows:
	\begin{equation}\label{4e38b}
	\begin{split}
	\| \nabla U^n \| \, \le \, \|  { \nabla \overline{U}}^n \| + T \| \nabla u_1 \| \, \le  \, C.
	\end{split}
    \end{equation}
\end{remark}
\begin{remark}\label{4rmk3}
	 The existence-uniqueness result for the fully-discrete solution $\overline{U}^n$ and $ V^n$ of \eqref{4e18} follows, in similar lines as given in \cite{[sv_h],[sp1]}, by using \textit{a priori} bound and a consequence of the Brouwer fixed point theorem \cite{[vth]}.
\end{remark}

\section{\textit{A priori} error estimate}
For the derivation of error estimate, we need following regularity assumptions.
\begin{equation}\label{4e39}
\begin{split}
{^{c}_{0}{D}^{\alpha}_{t_n} u} \in {L^{\infty}(0, T; {H^2(\Omega)})} \quad \mbox{and} \quad u \in {L^{\infty}(0, T; {H^1_0(\Omega) \cap H^2(\Omega)})}.
\end{split}
\end{equation}
Similar to \cite{[sfor1]}, in this work work, we also assume that
\begin{equation}\label{4e39a}
\begin{split}
\|\partial^q_t u\|_{1} \, \le \,  C \, (1 + t^{\alpha - q}) \quad \mbox{and} \quad \|\partial^q_t v\|_{1} \, \le \,  C \, (1 + t^{\beta - q}), \quad \mbox{for} \: q=0,1,2,3.
\end{split}
\end{equation}
Now, from the definition of $\overline{u}$ and assumption \eqref{4e39a}, it follows that $\|\partial^q_t \overline{u}\|_{1} \, \le \,  C \, (1 + t^{\alpha - q}),$ for $q=0,1,2,3.$\\
Next, we use the Ritz-projection $R_h$ to split the error as follows:
\begin{equation}\label{4e41}
\begin{split}
\overline{u}^n - \overline{U}^n = \overline{u}^n - R_h\overline{u}^n + R_h\overline{u}^n - \overline{U}^n = \rho ^n + \theta ^n,\\
v^n - V^n = v^n - R_hv^n + R_hv^n - V^n = \xi ^n + \eta ^n,
\end{split}
\end{equation}
where \  $ \rho ^n := \overline{u}^n - R_h\overline{u}^n,$  \ $\theta ^n := R_h\overline{u}^n - \overline{U}^n,$ \ $\xi^n := v^n - R_hv^n$ \ and \ $\eta^n := R_hv^n - V^n.$ \\
\begin{theorem}\label{4th2}
	For $2 \le n \le N$, let $(\overline{u}^n, v^n)$ and $(\overline{U}^n, V^n)$ be the solution of \eqref{4e4} and \eqref{4e18} respectively, then
	\begin{equation}\label{4e42}
	  \| \nabla \overline{u}^n - \nabla \overline{U}^n \| + \| v^n - V^n \| \le C \, \big(h + N^{-\min \left\lbrace 2 - \beta, \, r \beta \right\rbrace }\big).
	\end{equation}
\end{theorem}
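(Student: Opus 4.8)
The plan is to estimate $\theta^n$ and $\eta^n$ via the discrete Grönwall inequality of Lemma \ref{4l5}, since the $\rho^n$ and $\xi^n$ pieces are controlled directly by the Ritz-projection estimate \eqref{4e40} together with the regularity assumptions \eqref{4e39}, \eqref{4e39a}. First I would write down the error equations: subtract the fully-discrete scheme \eqref{4e18} from the weak formulation \eqref{4e4} evaluated at $t_n$, using the defining property \eqref{e62} of $R_h$ to kill the stiffness terms involving $\rho^n$ and to replace $^{c}_{0}D^{\beta}_{t_n}$ by $D^\beta_N R_h(\cdot)^n$ up to the consistency error of Lemma \ref{4l2}. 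This yields, for all $\omega \in X_h$,
\begin{align*}
  \big(D^\beta_N \eta^n, \omega\big) + a\big(l(\widehat U^n)\big)\,(\nabla\theta^n,\nabla\omega)
    &= \big(\mathcal{R}_1^n,\omega\big) + \big[a(l(\widehat u^n)) - a(l(\widehat U^n))\big]\,(\nabla \overline u^n - \nabla u_1 t_n,\nabla\omega) - \big(D^\beta_N \xi^n,\omega\big),\\
  \big(D^\beta_N \theta^n,\omega\big) - (\eta^n,\omega) &= \big(\mathcal{R}_2^n,\omega\big) - \big(D^\beta_N \rho^n,\omega\big),
\end{align*}
where $\mathcal R_1^n, \mathcal R_2^n$ collect the $L1$ truncation errors (bounded by $C t_n^{-\beta} N^{-\min\{2-\beta,r\beta\}}$ via Lemma \ref{4l2}) and the extrapolation error $l(\widehat u^n) - l(u^n)$ handled by Lemma \ref{4l3}. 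Note $\widehat u^n$ should be read as the extrapolation built from the exact solution; the term $l(\widehat u^n) - l(u^n)$ is $O(N^{-\min\{2,r\alpha\}})$, which is dominated by the target rate.

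Next I would test the first equation with $\omega = \eta^n$ and the second with $\omega = -a(l(\widehat U^n))\Delta_h\theta^n$, exactly as in the proof of Theorem \ref{4th1}, add them, and invoke Lemma \ref{4l6} to obtain
\begin{equation*}
  D^\beta_N\big(\|\eta^n\|^2 + m_1\|\nabla\theta^n\|^2\big) \le \big(\|\eta^n\| + \sqrt{m_1}\|\nabla\theta^n\|\big)\, g^n + (\text{terms needing the nonlinearity})\,,
\end{equation*}
where $g^n$ absorbs the Ritz and truncation contributions. The delicate point is the term $[a(l(\widehat u^n)) - a(l(\widehat U^n))](\nabla\overline u^n - t_n\nabla u_1,\nabla\eta^n)$: using Lipschitz continuity H2, the definition of $l$, the identity $l(w_1)-l(w_2) = (\nabla(w_1-w_2),\nabla(w_1+w_2))$, the a priori bounds (Theorem \ref{4th1}, Remark \ref{4rmk2}) that make $\|\nabla\widehat U^n\|$, $\|\nabla\widehat u^n\|$ uniformly bounded, and the regularity $\|\nabla\overline u^n\|\le C$, one bounds this by $C\|\nabla(\widehat{\overline u}^n - \widehat{\overline U}^n)\|\,\|\nabla\eta^n\|$. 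Expanding $\widehat{\overline U}^n$ through \eqref{4e12} and using \eqref{4e5b} to control the ratios $\tau_n/\tau_{n-1}$, this is $\le C(\|\nabla\theta^{n-1}\| + \|\nabla\theta^{n-2}\| + \|\nabla\rho^{n-1}\| + \|\nabla\rho^{n-2}\|)\|\nabla\eta^n\|$. To feed this into Lemma \ref{4l5} one needs $\|\nabla\eta^n\|$ on the right replaced by $\|\eta^n\|$ plus lower order; this is the standard subtlety — one either absorbs $\|\nabla\eta^n\|$ by a discrete inverse-type argument on $X_h$, or, cleaner, carries $\|\nabla\theta^n\|^2$ only and bounds the cross term by $\varepsilon\|\nabla\eta^n\|^2$ handled separately, or invokes an $H^1$-coercivity variant of Lemma \ref{4l6}. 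I would follow the route of \cite{[sfor1]}: keep the right-hand side quadratic of the form $\sum_k \lambda_{n-k}(\|\eta^k\|+\sqrt{m_1}\|\nabla\theta^k\|)^2 + (\cdots)g^n$ with $\lambda_i$ small and summable, which is exactly the hypothesis \eqref{4e27}.

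Then Lemma \ref{4l5} with $\chi^n = \|\eta^n\|$, $\psi^n = \sqrt{m_1}\|\nabla\theta^n\|$ gives
\begin{equation*}
  \|\eta^n\| + \|\nabla\theta^n\| \le C\,E_\beta(4\Lambda t_n^\beta)\Big(\|\eta^0\| + \|\nabla\theta^0\| + \max_{1\le k\le n}\sum_{i=1}^k Q^{(k)}_{k-i}\,g^i\Big).
\end{equation*}
Since $\theta^0 = R_h u_0 - U^0 = 0$ by \eqref{4e18:c} and $\eta^0 = R_h v^0 - V^0 = 0$ (both $v^0$ and $V^0$ vanish), the initial terms drop. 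For the sum, $g^i \le C t_i^{-\beta} N^{-\min\{2-\beta,r\beta\}} + C h + (\text{faster decaying})$ — the $t_i^{-\beta}$ singularity is precisely what Lemma \ref{4l4} (with $\gamma = \beta$) was designed to absorb, yielding $\sum_i Q^{(k)}_{k-i} t_i^{-\beta} \le C$ up to the $N^{r(\gamma-\beta)} = 1$ factor; the $Ch$ part sums to $Ch$ similarly. This produces $\|\eta^n\| + \|\nabla\theta^n\| \le C(h + N^{-\min\{2-\beta,r\beta\}})$. Finally, the triangle inequality $\|\nabla\overline u^n - \nabla\overline U^n\| \le \|\nabla\rho^n\| + \|\nabla\theta^n\|$ and $\|v^n - V^n\| \le \|\xi^n\| + \|\eta^n\|$, combined with \eqref{4e40} applied to $\overline u^n$ and $v^n$ (using \eqref{4e39}), gives the claimed estimate \eqref{4e42}.

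The main obstacle is the nonlinear coefficient term: converting $a(l(\widehat u^n)) - a(l(\widehat U^n))$ into something controlled by past errors $\theta^{n-1},\theta^{n-2}$ requires the uniform a priori bounds of Section 4 (so that $l(\widehat U^n)$ stays in a fixed bounded set where $a$ is Lipschitz with a usable constant) and careful bookkeeping of the extrapolation weights via \eqref{4e5b}, and then arranging the resulting estimate into exactly the Grönwall template \eqref{4e27} with small, summable $\lambda_i$ so that the step-size restriction \eqref{4e26} is met — this is where following \cite{[sfor1]} closely pays off.
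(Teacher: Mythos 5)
Your overall strategy coincides with the paper's: the same Ritz-projection splitting, the same test functions $\omega=\eta^n$ and $\omega=-a\big(l(\widehat U^n)\big)\Delta_h\theta^n$, the same treatment of the nonlocal coefficient via H2, Lemma \ref{4l3} and the extrapolation expansion controlled by \eqref{4e5b}, and the same Gr\"onwall argument (Lemma \ref{4l5}) with $\chi^n=\|\eta^n\|$, $\psi^n=\sqrt{m_1}\,\|\nabla\theta^n\|$. There is, however, one genuine gap, and you have put your finger on it yourself: the term $\big[a(l(u^n))-a(l(\widehat U^n))\big]\,(\nabla\overline u^n,\nabla\eta^n)$. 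None of the three workarounds you float actually closes it: an inverse inequality on $X_h$ costs a factor $h^{-1}$; there is no $\|\nabla\eta^n\|^2$ on the left-hand side of the energy identity into which an $\varepsilon\|\nabla\eta^n\|^2$ could be absorbed; and an $H^1$-coercivity variant of Lemma \ref{4l6} would require testing \eqref{4e18:a} with $-\Delta_h\eta^n$, which destroys the cancellation with \eqref{4e18:b}. The paper's resolution is much simpler and must be stated explicitly: since $\eta^n\in X_h\subset H^1_0(\Omega)$ and $\overline u^n\in H^2(\Omega)$ by \eqref{4e39}, one integrates by parts, $(\nabla\overline u^n,\nabla\eta^n)=-(\Delta\overline u^n,\eta^n)$, so the factor multiplying the Lipschitz difference becomes $\|\Delta\overline u^n\|\,\|\eta^n\|$ (see \eqref{4e48}--\eqref{4e49}) and only $\|\eta^n\|$ enters the Gr\"onwall template \eqref{4e27}. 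Without this step your energy inequality is not of the form required by Lemma \ref{4l5}.

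A second, more minor point: to bound $\max_k\sum_{i=1}^kQ^{(k)}_{k-i}\,t_i^{-\beta}$ you invoke Lemma \ref{4l4} with $\gamma=\beta$, but that choice only controls the unweighted sum $\sum_iQ^{(k)}_{k-i}$ (which the paper uses for the $h$ and $N^{-\min\{2,\,r\beta\}}$ parts of $g^i$). Since $t_i^{-\beta}=T^{-\beta}N^{r\beta}i^{-r\beta}$ and $\gamma=0$ is excluded from the lemma, the paper takes $\gamma=l_N=1/\ln N$, uses $i^{-r\beta}\le i^{r(l_N-\beta)}$, and obtains a bound proportional to $N^{rl_N}=e^{r}$; see \eqref{4e61}--\eqref{4e62}. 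The conclusion $\sum_iQ^{(k)}_{k-i}t_i^{-\beta}\le C$ that you assert is correct, but the parameter choice you give does not deliver it. Everything else in your outline (vanishing initial terms, the final triangle inequality with \eqref{4e40}) matches the paper's proof.
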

\begin{proof}
	From the fully-discrete formulation \eqref{4e18}, we can get
   \begin{equation}\label{4e43}
   	\begin{split}
    	&\big(D^{\beta} _{N} \, \eta^n, \, \omega \big) \, + \, a\big(l(\widehat{U}^n)\big) \, (\nabla \theta^n, \, \nabla \omega) \\
    	&= \, \big(D^{\beta} _{N} R_h v^n, \, \omega \big) \, - \, \big(D^{\beta} _{N} V^n, \, \omega \big) \, + \, a\big(l(\widehat{U}^n)\big) \, (\nabla R_h \overline{u}^n, \, \nabla \omega) \,  - \, a\big(l(\widehat{U}^n)\big) \, (\overline{U}^n, \, \nabla \omega)\\
    	&= \, \big(D^{\beta} _{N} R_h v^n, \, \omega \big) \, + \, a\big(l(\widehat{U}^n)\big) \, (\nabla \overline{u}^n, \, \nabla \omega) \, - \, (f^n, \, \omega) \, - \, t_n \, a\big(l(\widehat{U}^n)\big) \, (\Delta u_1, \, \nabla \omega)\\
    	&= \, \big(D^{\beta} _{N} R_h v^n, \, \omega \big) \, + \, a\big(l(\widehat{U}^n)\big) \, (\nabla \overline{u}^n, \, \nabla \omega) \, - \,
    	\big(^{c}_{0}{D}^{\beta}_{t_n}v, \omega \big) \, - \, a\big(l(u^n)\big) \, (\nabla \overline{u}^n, \nabla \omega) \\
    	&\qquad + \, t_n \, a\big(l(u^n)\big) \, (\Delta u_1, \omega)\, - \, t_n \, a\big(l(\widehat{U}^n)\big) \, (\Delta u_1, \, \nabla \omega)\\
    	&= \, \big(D^{\beta} _{N} R_h v^n - \, ^{c}_{0}{D}^{\beta}_{t_n}v, \, \omega \big) \, + \, \big\{ a \big(l(\widehat{U}^n)\big) -  a\big(l(u^n)\big) \big\} \, (\nabla \overline{u}^n, \nabla \omega) \\
    	&\qquad + \, t_n \, \big\{ a\big(l(u^n)\big) - a \big(l(\widehat{U}^n)\big) \big\} \, (\Delta u_1, \,  \omega),\\
   	\end{split}
   \end{equation}
   and
   \begin{equation}\label{4e44}
   \begin{split}
     \big(D^{\beta} _{N} \theta^n, \, \omega \big) \, - \, ( \eta^n, \, \omega) &= \, \big(D^{\beta} _{N} R_h \overline{u}^n, \, \omega \big) \, - \, \big(D^{\beta} _{N}  \overline{U}^n, \, \omega \big) \, - \, (R_h v^n, \, \omega) \, + \, (V^n, \, \omega)\\
     &= \, \big(D^{\beta} _{N} R_h \overline{u}^n - \, ^{c}_{0}{D}^{\beta}_{t_n} R_h \overline{u}, \, \omega \big).\\
   \end{split}
   \end{equation}
   Choosing $\omega = \eta^n$ in \eqref{4e43} and $\omega= - a \big(l(\widehat{U}^n)\big) \, \Delta_h \theta^n$ in \eqref{4e44}, we have
   \begin{equation}\label{4e45}
   \begin{split}
     &\big(D^{\beta} _{N} \, \eta^n, \, \eta^n \big) \, + \, a\big(l(\widehat{U}^n)\big) \, (\nabla \theta^n, \, \nabla \eta^n) = \, \big(D^{\beta} _{N} R_h v^n - \, ^{c}_{0}{D}^{\beta}_{t_n}v, \, \eta^n \big) \\
     &+ \, \big\{ a \big(l(\widehat{U}^n)\big) -  a\big(l(u^n)\big) \big\} \, (\nabla \overline{u}^n, \nabla \eta^n) \, + \, t_n \, \big\{ a\big(l(u^n)\big) - a \big(l(\widehat{U}^n)\big) \big\} \, (\Delta u_1, \,  \eta^n),\\
   \end{split}
   \end{equation}
   and
   \begin{equation}\label{4e46}
   \begin{split}
   - a \big(l(\widehat{U}^n)\big) \, \big(D^{\beta} _{N} \theta^n, \, \Delta_h \theta^n \big) \, + \,  &a \big(l(\widehat{U}^n)\big) \, ( \eta^n, \, \Delta_h \theta^n)  \qquad \qquad \qquad \qquad \qquad \qquad \qquad \qquad\\
    & \; = \, - a \big( l(\widehat{U}^n)\big) \, \big(D^{\beta} _{N} R_h \overline{u}^n - \, ^{c}_{0}{D}^{\beta}_{t_n} R_h \overline{u}, \, \Delta_h \theta^n \big).\\
   \end{split}
   \end{equation}
   Using the definition of $\Delta_h,$ \eqref{4e46} can be written as
   \begin{equation}\label{4e47}
   \begin{split}
    a \big(l(\widehat{U}^n)\big) \, \big( \nabla D^{\beta} _{N} \theta^n, \, \nabla \theta^n \big) \, - \,  &a \big(l(\widehat{U}^n)\big) \, ( \nabla \eta^n, \, \nabla \theta^n)  \qquad \qquad \qquad \qquad \qquad \qquad \qquad \qquad\\
   & \; = \,  a \big( l(\widehat{U}^n)\big) \, \big( \nabla D^{\beta} _{N} R_h \overline{u}^n - \, \nabla \, ^{c}_{0}{D}^{\beta}_{t_n} R_h \overline{u}, \, \nabla \theta^n \big).\\
   \end{split}
   \end{equation}
   On adding \eqref{4e45} and \eqref{4e47}, we have
   \begin{equation}\label{4e48}
   \begin{split}
    &\big(D^{\beta} _{N} \, \eta^n, \, \eta^n \big) \, + \, a \big(l(\widehat{U}^n)\big) \, \big( \nabla D^{\beta} _{N} \theta^n, \, \nabla \theta^n \big) = \, \big(D^{\beta} _{N} R_h v^n - \, ^{c}_{0}{D}^{\beta}_{t_n}v, \, \eta^n \big) \\
    &+ \, \big\{ a\big(l(u^n)\big) - a \big(l(\widehat{U}^n)\big) \big\} \, (\Delta \overline{u}^n, \, \eta^n) \, + \, t_n \, \big\{ a\big(l(u^n)\big) - a \big(l(\widehat{U}^n)\big) \big\} \, (\Delta u_1, \, \eta^n)\\
    &+ a \big( l(\widehat{U}^n)\big) \, \big( \nabla D^{\beta} _{N} \overline{u}^n - \, \nabla \, ^{c}_{0}{D}^{\beta}_{t_n} \overline{u}, \, \nabla \theta^n \big).
   \end{split}
   \end{equation}
   Using the Cauchy-Schwarz inequality and bound of $a$ in \eqref{4e48}, we get
   \begin{equation}\label{4e49}
   \begin{split}
    &\big(D^{\beta} _{N} \, \eta^n, \, \eta^n \big) \, + \, m_1 \, \big( \nabla D^{\beta} _{N} \theta^n, \, \nabla \theta^n \big) \le \, \big\| D^{\beta} _{N} R_h v^n - \, ^{c}_{0}{D}^{\beta}_{t_n}v \big\| \, \| \eta^n \| \\
    &+ \, \big| a\big(l(u^n)\big) - a \big(l(\widehat{U}^n)\big) \big| \, \| \Delta \overline{u}^n \| \, \| \eta^n \| \, + \, T \, \big| a\big(l(u^n)\big) - a \big(l(\widehat{U}^n)\big) \big| \, \| \Delta u_1 \| \, \| \eta^n \|\\
    &+ m_2 \, \big\| \nabla D^{\beta} _{N}  \overline{u}^n - \, \nabla \, ^{c}_{0}{D}^{\beta}_{t_n}  \overline{u} \big\| \, \| \nabla \theta^n \|.
   \end{split}
   \end{equation}
   Now,
   \begin{equation}\label{4e50}
   \begin{split}
      \big\| D^{\beta} _{N} R_h v^n - \, ^{c}_{0}{D}^{\beta}_{t_n}v \big\| \, \le& \, \big\| D^{\beta} _{N} R_h v^n - \, ^{c}_{0}{D}^{\beta}_{t_n} R_h v \big\| + \big\| \, ^{c}_{0}{D}^{\beta}_{t_n} R_h v - \, ^{c}_{0}{D}^{\beta}_{t_n}v \big\|\\
      \le& \, C \, \big( h^2 + t_n^{- \beta} \, N^{-\min \left\lbrace 2 - \beta, \, r \beta \right\rbrace } \big),
   \end{split}
   \end{equation}
   and
   \begin{equation}\label{4e51}
   \begin{split}
   \big\| \nabla D^{\beta} _{N}  \overline{u}^n - \, \nabla \, ^{c}_{0}{D}^{\beta}_{t_n} \overline{u} \big\|
   \le \, C \,  t_n^{- \beta} \, N^{-\min \left\lbrace 2 - \beta, \, r \alpha \right\rbrace } \,
   \le& \, C \,  t_n^{- \beta} \, N^{-\min \left\lbrace 2 - \beta, \, r \beta \right\rbrace },
   \end{split}
   \end{equation}
   where we have used Lemma \ref{4l2} and \eqref{4e40}.\\
   Also,
   \begin{equation}\label{4e52}
   \begin{split}
     \big| a\big(l(u^n)\big) - a \big(l(\widehat{U}^n)\big) \big| \, \le& \, L \, \big| l(u^n) -  l(\widehat{U}^n) \big| \, \le \, L \, \big| \|\nabla u^n \|^2 - \|\nabla \widehat{U}^n \|^2 \big| \\
     \le& \, L  \, \big| \|\nabla u^n \| - \|\nabla \widehat{U}^n \| \big| \, \big( \|\nabla u^n \| + \|\nabla \widehat{U}^n \| \big) \\
     \le& \, C \, L  \,  \|\nabla u^n  - \nabla \widehat{U}^n \| \\
     \le& \, C L \, \big(\|\nabla u^n - \nabla \widehat{u}^n\| + \|\nabla \widehat{u}^n - \nabla \widehat{U}^n\| \big).\\
   \end{split}
   \end{equation}
   From Lemma \ref{4l3}, one can get
   \begin{equation}\label{4e53}
   \begin{split}
      \|\nabla u^n - \nabla \widehat{u}^n\| \, \le \, C \, N^{-min \left\lbrace 2, \, r \alpha \right\rbrace } \, \le \, C \, N^{-min \left\lbrace 2, \, r \beta \right\rbrace },
   \end{split}
   \end{equation}
   and from \eqref{4e5b} and \eqref{4e40}, we have
   \begin{equation}\label{4e54}
   \begin{split}
     \|\nabla \widehat{u}^n - \nabla \widehat{U}^n\| \,
     =& \, \Big\| \Big( 1 + \frac{\tau_n}{\tau_{n-1}}\Big) \, \nabla( u^{n-1} -  U^{n-1}) - \frac{\tau_n}{\tau_{n-1}} \, \nabla( u^{n-2} - U^{n-2}) \Big\| \\
     =& \, \Big\| \Big( 1 + \frac{\tau_n}{\tau_{n-1}}\Big) \, \nabla(\rho^{n-1} + \theta^{n-1}) - \frac{\tau_n}{\tau_{n-1}} \, \nabla(\rho^{n-2} + \theta^{n-2}) \Big\| \\
     \le& \, \Big( 1 + \frac{\tau_n}{\tau_{n-1}}\Big) \, \big(\|\nabla \rho^{n-1} \| +  \|\nabla \theta^{n-1} \| \big) + \frac{\tau_n}{\tau_{n-1}} \, \big( \|\nabla \rho^{n-2} \| +  \|\nabla \theta^{n-2} \| \big)\\
     \le& \, C (1+ C_r) h + (1+ C_r) \|\nabla \theta^{n-1} \| + C C_r h + C_r \|\nabla \theta^{n-2} \| \\
     \le& C (1+ 2 C_r) h + (1+ C_r) \|\nabla \theta^{n-1} \| + C_r \|\nabla \theta^{n-2} \|. \\
    \end{split}
   \end{equation}
   Using \eqref{4e53} and \eqref{4e54} in \eqref{4e52}, we can arrive at
   \begin{equation}\label{4e55}
   \begin{split}
   \big| a\big(l(u^n)\big) - a \big(l(\widehat{U}^n)\big) \big| \, \le& \, C L N^{-min \left\lbrace 2, \, r \beta \right\rbrace } \, + \,  C L (1+ 2 C_r) \, h \, + \, C L (1+ C_r) \, \|\nabla \theta^{n-1} \| \\
   &+ C L C_r \, \|\nabla \theta^{n-2} \|.\\
   \end{split}
   \end{equation}
   Substituting the values from  \eqref{4e50}, \eqref{4e51}, \eqref{4e55} into \eqref{4e49} together with \eqref{4e39} and Hypothesis H3, we get
   \begin{equation}\label{4e56}
   \begin{split}
     &\big(D^{\beta} _{N} \, \eta^n, \, \eta^n \big) \, + \, m_1 \, \big( \nabla D^{\beta} _{N} \theta^n, \, \nabla \theta^n \big) \le \, C \, \big( h^2 + t_n^{- \beta} \, N^{-\min \left\lbrace 2 - \beta, \, r \beta \right\rbrace } \big)  \, \| \eta^n \| \\
      &+ \, \Big(C L N^{-min \left\lbrace 2, \, r \beta \right\rbrace } \,
     + \,  C L (1+ 2 C_r) \, h \, + \, C L (1+ C_r) \, \|\nabla \theta^{n-1} \| + C L C_r \, \|\nabla \theta^{n-2} \|\Big) \\
     &\qquad C (1 + T) \, \| \eta^n \| \, + \, C \, m_2 \, t_n^{- \beta} \, N^{-\min \left\lbrace 2 - \beta, \, r \beta \right\rbrace } \, \| \nabla \theta^n \|.
   \end{split}
   \end{equation}
   Using Lemma \ref{4l6} and the fact that $C \, (h^2 + h + N^{-min \left\lbrace 2, \, r \beta \right\rbrace } ) \, \sqrt{m_1} \, \| \nabla \theta^n \| \, \ge \, 0$ in \eqref{4e56}, we can obtain
   \begin{equation}\label{4e57}
   \begin{split}
   &D^{\beta} _{N} \big( \| \eta^n \|^2  \, + \, m_1 \, \| \nabla \theta^n \|^2 \big) \, \le \, C_1 \, \big(h^2 + h + N^{-min \left\lbrace 2, \, r \beta \right\rbrace } + t_n^{- \beta} \, N^{-\min \left\lbrace 2 - \beta, \, r \beta \right\rbrace } \big)  \, \| \eta^n \| \qquad \qquad \qquad \qquad\\
   &+ \, \frac{C L}{\sqrt{m_1}} (1+ C_r) \, (1 + T ) \, \sqrt{m_1} \, \| \nabla \theta^{n-1} \| \, \| \eta^n \| + \, C \, \big( h^2 + h + N^{-min \left\lbrace 2, \, r \beta \right\rbrace } \big) \, \sqrt{m_1} \, \| \nabla \theta^n \| \\
   &+ \,  \frac{C L C_r}{\sqrt{m_1}} \, (1 + T ) \, \sqrt{m_1} \, \| \nabla \theta^{n-2} \| \, \| \eta^n \| + \, \frac{C \, m_2}{\sqrt{m_1}} \, t_n^{- \beta} \, N^{-\min \left\lbrace 2 - \beta, \, r \beta \right\rbrace } \, \sqrt{m_1} \, \| \nabla \theta^n \|,\\
   \end{split}
   \end{equation}
   where $C_1$ is dependent on $L,$ $C_r$  and $T.$ \\
   An application of the inequality $ab \le \frac{a^2}{2} + \frac{b^2}{2}$ in \eqref{4e57} gives
   \begin{equation}\label{4e58}
   \begin{split}
   D^{\beta} _{N} \big( \| \eta^n \|^2  \, + \, m_1 \, \| \nabla \theta^n \|^2 \big) \, \le \, \frac{C_3}{2} \, m_1 \, \| \nabla \theta^{n-1} \|^2 \, + \,  \| \eta^n \|^2 \, +  \,  \frac{C_4}{2} \, m_1 \, \| \nabla \theta^{n-2} \|^2& \\
   +\, C_2 \, \big(h^2 +  h + N^{-min \left\lbrace 2, \, r \beta \right\rbrace } + t_n^{- \beta} \, N^{-\min \left\lbrace 2 - \beta, \, r \beta \right\rbrace } \big)  \, \big( \| \eta^n \| + \, \sqrt{m_1} \, \| \nabla \theta^n \| \big),& \\
   \end{split}
   \end{equation}
   where constant $C_2$ is depending on $C_1,$ $m_1,$ $m_2$ and constants $C_3$ and $C_4$ are depending on $L,$ $C_r,$ $T,$ $m_1.$ \\
   As $\Big(\frac{C_3}{2} \, \| \eta^{n-1} \|^2 \, + \, \frac{C_4}{2} \, \| \eta^{n-2} \|^2 \, + \, m_1 \, \| \nabla \theta^n \|^2 \Big) \, \ge \, 0,$ from \eqref{4e58}, we have
   \begin{equation}\label{4e59}
   \begin{split}
   D^{\beta} _{N} \big( \| \eta^n \|^2  \, + \, m_1 \, \| \nabla \theta^n \|^2 \big) &\, \le  \frac{C_3}{2} \, \big(\| \eta^{n-1} \|^2 \, + \, m_1 \, \| \nabla \theta^{n-1} \|^2 \big) \, + \,  \big( \| \eta^n \|^2 \, + \, m_1 \, \| \nabla \theta^n \|^2 \big) \\
   &+  C_2 \, \big( h^2 + h + N^{-min \left\lbrace 2, \, r \beta \right\rbrace } + t_n^{- \beta} \, N^{-\min \left\lbrace 2 - \beta, \, r \beta \right\rbrace } \big) \\
   & \big( \| \eta^n \| + \, \sqrt{m_1} \, \| \nabla \theta^n \| \big) \, +  \,  \frac{C_4}{2} \, \big(\| \eta^{n-2} \|^2 \, + \, m_1 \, \| \nabla \theta^{n-2} \|^2 \big). \\
   \end{split}
   \end{equation}
   Applying Lemma \ref{4l5} in \eqref{4e59} $\Big(\mbox {with}$ $\chi^n = \| \eta^n \|,$ $\psi^n = \sqrt{m_1} \, \| \nabla \theta^n \|,$ $\lambda_0 = 1,$ $\lambda_1 = \frac{C_3}{2},$ $\lambda_2 = \frac{C_4}{2},$ $\lambda_i = 0,$ for $\forall i = 3, 4, \dots, N-1,$ $g^n = C_2 \, \big( h^2 + h + N^{-min \left\lbrace 2, \, r \beta \right\rbrace } + t_n^{- \beta} \, N^{-\min \left\lbrace 2 - \beta, \, r \beta \right\rbrace } \big)$ and taking $\Lambda = \lambda_0 + \lambda_1 + \lambda_2 \Big)$, we can arrive at
   \begin{equation}\label{4e60}
   \begin{split}
   \| \eta^n \| \, + \, \sqrt{m_1} \, \| \nabla \theta^n \| \, \le \, 4  &E_{\beta} \big( 4 \, \Lambda \, t_n^{ \beta} \big) \, \Big[ \| \eta^0 \| + \sqrt{m_1} \, \| \nabla \theta^0 \| + C_2 \big(h^2 + h + N^{-min \left\lbrace 2, \, r \beta \right\rbrace } \big)\\
   & \max_{1 \le k \le n}  \sum_{i=1}^{k} Q^{(k)}_{k-i} \, + C_2 \, N^{-\min \left\lbrace 2 - \beta, \, r \beta \right\rbrace } \, \max_{1 \le k \le n}  \sum_{i=1}^{k} Q^{(k)}_{k-i} \, t_i^{- \beta} \Big]. \\
   \end{split}
   \end{equation}
   Now,
   \begin{equation}\label{4e61}
   \begin{split}
   t_i^{- \beta} \, \le \, T^{-\beta} N^{r \beta} \, i^{-r \beta} \, \le \, T^{-\beta} N^{r \beta} \, i^{ r (l_N -\beta ) }, \quad \mbox{where} \; \: l_N = \frac{1}{\ln N}.
   \end{split}
   \end{equation}
   Using Lemma \ref{4l4} with $\gamma = l_N$ and $\gamma = \beta$, we have
   \begin{equation}\label{4e62}
   \begin{split}
   \sum_{i=1}^{k} Q^{(k)}_{k-i} \, i^{r(l_N - \beta)} \, \le& \, \frac{\Gamma {(1+ l_N - \beta)}}{\Gamma {(1+ l_N)}} \, T^{\beta} \, \Big(\frac{t_n}{T}\Big)^{l_N} \, N^{{r(l_N - \beta)}}\\
   \le&  \, \frac{\Gamma {(1+ l_N - \beta)}}{\Gamma {(1+ l_N)}} \, T^{\beta} \, N^{{r(l_N - \beta)}},
   \end{split}
   \end{equation}
   and
   \begin{equation}\label{4e63}
   \begin{split}
   	\sum_{i=1}^{k} Q^{(k)}_{k-i} \, \le& \, \frac{T^{\beta}}{\Gamma {(1+ \beta)}} \,  \, \Big(\frac{t_n}{T}\Big)^{\beta}  \, \le  \, \frac{T^{\beta}}{\Gamma {(1+ \beta)}}.
   \end{split}
   \end{equation}
  Since $\overline{U}^0 = U^0 = R_h u_0$ and $V^0 = 0,$ we get $\| \nabla \theta^0 \| = 0$ and $\| \nabla \eta^0 \| = 0.$ \\
   Substituting the values from \eqref{4e61}-\eqref{4e63} in \eqref{4e60}, one can get
   \begin{equation}\label{4e64}
   \begin{split}
   \| \eta^n \|  \, + \, \sqrt{m_1} \, \| \nabla \theta^n \| \, \le& \, 4 \, E_{\beta} \big( 4 \, \Lambda \, t_n^{ \beta} \big) \, \Big[ C_2 \, \big(h^2 + h + N^{-min \left\lbrace 2, \, r \beta \right\rbrace } \big) \, \frac{T^{\beta}}{\Gamma {(1+ \beta)}} \qquad\\
   &+ C_2 \, N^{-\min \left\lbrace 2 - \beta, \, r \beta \right\rbrace } \, \frac{\Gamma {(1+ l_N - \beta)}}{\Gamma {(1+ l_N)}} \, T^{-\beta} N^{r \beta} \, T^{\beta} \, N^{{r(l_N - \beta)}} \Big] \\
   \le& \, C \, \big(h^2 + h + N^{-min \left\lbrace 2, \, r \beta \right\rbrace } + N^{-\min \left\lbrace 2 - \beta, \, r \beta \right\rbrace } \big)\\
   \le& \, C \, \big( h  +  N^{-\min \left\lbrace 2 - \beta, \, r \beta \right\rbrace } \big),
   \end{split}
   \end{equation}
   where $C$ is dependent on $C_2,$ $T,$ $l_N.$ \\
   Since $\min \{1, \sqrt{m_1}\} \, \big(\| \eta^n \|  \, + \, \| \nabla \theta^n \|\big) \, \le \, \| \eta^n \|  \, + \, \sqrt{m_1} \, \| \nabla \theta^n \|,$
   \begin{equation}\label{4e65}
   \begin{split}
     \big(\| \eta^n \|  \, + \, \| \nabla \theta^n \|\big) \, \le \, C \, \big( h  +  N^{-\min \left\lbrace 2 - \beta, \, r \beta \right\rbrace } \big).
   \end{split}
   \end{equation}
   Thus, using \eqref{4e40} and \eqref{4e65} together with triangle inequality, we obtain
   \begin{equation}\label{4e66}
   \| \nabla \overline{u}^n - \nabla \overline{U}^n \| + \| v^n - V^n \| \le C \, \big(h + N^{-\min \left\lbrace 2 - \beta, \, r \beta \right\rbrace }\big).
   \end{equation}
   This completes the proof.
\end{proof}
\begin{remark}\label{4rmk5}
	In above work, we have assumed that problem \eqref{4e1} or \eqref{4e4} has a unique solution with sufficient regularity. The proof of the same is yet to be discovered.
\end{remark}
\section{Numerical experiments}
   In order to confirm our theoretical findings, in this section we perform some numerical experiments. In each example, the time interval is taken to be $[0,1]$ and a quasi-uniform partition of $\Omega$ is used with $(M_s+1)$ node points in each spatial direction.
  In order to get an optimal convergence rate, we choose grading parameter $r = \frac{2- \beta}{\beta}.$ To obtain the rate of convergence in temporal direction,
  we set $M_s=N^{2- \beta}.$ Similarly, We set $N=M_s^{\frac{2}{2- \beta}}$ to conclude the order of convergence in spatial direction. \medskip \\
\begin{example}\label{4E1}
	In first example, we take $\Omega = (0, \pi),$ $a(w)=3+\sin w$ and choose $f(x,t)$ in \eqref{4e1} such that the analytical solution of problem \eqref{4e1} is $u(x,t)=(t^3+t^{\alpha}) \sin x$. 	
\end{example}
As shown in Table \ref{4T1}, on graded mesh, $N^{-(2-\beta)}$ convergence rate in temporal direction is obtained in $L^{\infty}\big(H^1_0(\Omega)\big)$ norm for the different values of $\alpha.$
Table \ref{4T2} confirms the order of convergence in the spatial direction in $H^1_0(\Omega)$ norm.
\begin{center}
	\begin{tiny}
		\begin{table}[h!]
			\begin{center}
				\begin{tabular}{|c|c|c|c|c|c|c|}
					\hline
					\multirow{2}{*}{\large $N$} & \multicolumn{2}{c|}{$\alpha = 1.4$} & \multicolumn{2}{c|}{$\alpha = 1.5$} & \multicolumn{2}{c|}{$\alpha = 1.8$}\\
					\cline{2-7}
					& Error & OC & Error & OC & Error & OC  \\
					\hline
					$2^7$ & 7.01E-03 &  1.266809 & 8.63E-03 & 1.226166 & 1.62E-02 & 1.090252 \\
					\cline{1-1}\cline{2-7}
					$2^8$  & 2.91E-03 &  1.279812 & 3.69E-03 & 1.235579 & 7.63E-03 & 1.091875 \\
					\cline{1-1}\cline{2-7}
					$2^9$ & 1.20E-03 & 1.288394 & 1.57E-03 & 1.241687 & 3.58E-03 & 1.093030 \\
					\cline{1-1}\cline{2-7}
					$2^{10}$ &  4.91E-04 & - & 6.63E-04 & - & 1.68E-03 & - \\			
					\hline															
				\end{tabular}
			\end{center}
			\caption {\emph {Error and order of convergence  in $L^{\infty}\big(H^1_0(\Omega)\big)$ norm in temporal direction using graded mesh for Example \ref{4E1}.}}
			\label{4T1}
		\end{table}
	\end{tiny}
\end{center}
\begin{center}
	\begin{tiny}
		\begin{table}[h!]
			\begin{center}
				\begin{tabular}{|c|c|c|c|c|c|c|}
					\hline
					\multirow{2}{*}{\large $M_s$} & \multicolumn{2}{c|}{$\alpha = 1.4$} & \multicolumn{2}{c|}{$\alpha = 1.5$} & \multicolumn{2}{c|}{$\alpha = 1.8$}\\
					\cline{2-7}
					& Error & OC & Error & OC & Error & OC  \\
					\hline
					$2^4$ & 1.43E-01 & 1.005690 & 1.43E-01 & 1.005830 & 1.43E-01 & 1.005599 \\
					\cline{1-1}\cline{2-7}
					$2^5$ & 7.12E-02 & 1.001684 & 7.11E-02 & 1.001614 & 7.11E-02 & 1.001458 \\
					\cline{1-1}\cline{2-7}
					$2^6$  & 3.55E-02 & 1.000449 & 3.55E-02 & 1.000422 & 3.55E-02 & 1.000374 \\
					\cline{1-1}\cline{2-7}
					$2^7$ & 1.78E-02 & - & 1.78E-02 & - & 1.78E-02 & - \\
					\hline															
				\end{tabular}
			\end{center}
			\caption {\emph {Error and order of convergence in $H^1_0(\Omega)$ norm in spatial direction for Example \ref{4E1}.}}
			\label{4T2}
		\end{table}
	\end{tiny}
\end{center}
\begin{example}\label{4E2}
	For second example, we take $\Omega = (0, 1) \times (0, 1),$ $a(w)=3+\sin w$ and choose  $f(x,t)$ in such a way that the analytical solution of PDE in \eqref{4e1} is $u(x,t)=(t^3+t^{\alpha}) (x-x^2)(y-y^2)$.	
\end{example}
From Table \ref{4T3} and Table \ref{4T4}, it can be observed that for different values of $\alpha$, we get optimal order of convergence in temporal as well as spatial directions which is predicted in Theorem \eqref{4th2}.
\begin{center}
	\begin{tiny}
		\begin{table}[h!]
			\begin{center}
				\begin{tabular}{|c|c|c|c|c|c|c|}
					\hline
					\multirow{2}{*}{\large $N$} & \multicolumn{2}{c|}{$\alpha = 1.4$} & \multicolumn{2}{c|}{$\alpha = 1.5$} & \multicolumn{2}{c|}{$\alpha = 1.8$}\\
					\cline{2-7}
					& Error & OC & Error & OC & Error & OC  \\
					\hline
					$2^4$ & 1.00E-02 & 1.323437 & 1.12E-02 & 1.248964 & 1.71E-02 & 1.098823 \\
					\cline{1-1}\cline{2-7}
					$2^5$  & 4.00E-03 & 1.304197 & 4.73E-03 & 1.252916 & 7.97E-03 & 1.107867 \\
					\cline{1-1}\cline{2-7}
					$2^6$  & 1.62E-03 & 1.305113 & 1.99E-03 & 1.249295 & 3.70E-03 & 1.093619 \\
					\cline{1-1}\cline{2-7}
					$2^7$ & 6.56E-04 & - & 8.35E-04 & - & 1.73E-03 & - \\
					\hline															
				\end{tabular}
			\end{center}
			\caption {\emph {Error and order of convergence  in $L^{\infty}\big(H^1_0(\Omega)\big)$ norm in temporal direction using graded mesh for Example \ref{4E2}.}}
			\label{4T3}
		\end{table}
	\end{tiny}
\end{center}
\begin{center}
	\begin{tiny}
		\begin{table}[h!]
			\begin{center}
				\begin{tabular}{|c|c|c|c|c|c|c|}
					\hline
					\multirow{2}{*}{\large $M_s$} & \multicolumn{2}{c|}{$\alpha = 1.4$} & \multicolumn{2}{c|}{$\alpha = 1.5$} & \multicolumn{2}{c|}{$\alpha = 1.8$}\\
					\cline{2-7}
					& Error & OC & Error & OC & Error & OC  \\
					\hline
					$2^3$ & 4.45E-02 & 0.993890 & 4.45E-02 & 0.993887 & 4.45E-02 & 0.993892 \\
					\cline{1-1}\cline{2-7}
					$2^4$ & 2.24E-02 & 0.998464 & 2.24E-02 & 0.998464 & 2.24E-02 & 0.998465 \\
					\cline{1-1}\cline{2-7}
					$2^5$  & 1.12E-02 & 0.999615 & 1.12E-02 & 0.999615 & 1.12E-02 & 0.999615 \\
					\cline{1-1}\cline{2-7}
					$2^6$ & 5.60E-03 & - & 5.60E-03 & - & 5.60E-03 & - \\
					\hline 															
				\end{tabular}
			\end{center}
			\caption {\emph {Error and order of convergence in $H^1_0(\Omega)$ norm in spatial direction for Example \ref{4E2}.}}
			\label{4T4}
		\end{table}
	\end{tiny}
\end{center}
\begin{remark}\label{4rmk4}
	In present work, we have used the $L1$ scheme to discretize a time variable which leads to $O\big(N^{-\min \left\lbrace 2 - \beta, \, r \beta \right\rbrace }\big)$ accuracy in temporal direction. One can also use $L2$-$1_{\sigma}$ approximation \cite{[AAl2]} to discretize the Caputo derivative in order to achieve $O\big(N^{-\min \left\lbrace 2, \, r \beta \right\rbrace }\big)$ accuracy in temporal direction.
\end{remark}






\end{document}